\definecolor{myurlcolor}{rgb}{0,0,0.5}
\newcommand{\cat}[1]{\mathscr{#1}}
\newcommand{\ovln}[1]{\overline{#1}}
\newcommand{\twid}[1]{\widetilde{#1}}
\newcommand{\slsh}{/\linebreak[0]}
\newcommand{\dblslsh}{//\linebreak[0]}
\newcommand{\dt}{.\linebreak[0]}
\newcommand{\such}{:}
\newcommand{\without}{\setminus}
\newcommand{\epsln}{\varepsilon}
\newcommand{\integers}{\mathbb{Z}}
\newcommand{\du}{\sqcup}
\newcommand{\reals}{\mathbb{R}}
\newcommand{\rationals}{\mathbb{Q}}
\newcommand{\demph}[1]{\textbf{\textup{#1}}}
\newcommand{\iso}{\cong}
\newcommand{\nat}{\mathbb{N}}	
\newcommand{\sub}{\subseteq}
\newcommand{\mg}[1]{\# #1}
\newcommand{\mgb}[1]{\mg{(#1)}}
\DeclareMathOperator{\adj}{adj}
\newcommand{\card}[1]{\left|#1\right|}
\newcommand{\bigcard}[1]{\bigl|#1\bigr|}
\newcommand{\from}{\colon}
\newcommand{\pwr}[1]{\llbracket #1 \rrbracket}
\newcommand{\Lau}[1]{(\!( #1 )\!)}
\newcommand{\inftyball}[2]{V_{#2}(#1)}
\DeclareMathOperator{\sm}{sum}
\newcommand{\cp}{\mathbin{\Box}}
\newcommand{\strongprod}{\mathbin{\boxtimes}}
\newcommand{\jn}{\vee}
\newcommand{\cen}[1]{\begin{array}{c}#1\end{array}}
\newcommand{\edgeless}[1]{\overline{K_{#1}}}
\newcommand{\cell}[4]{\put(#1,#2){\makebox(0,0)[#3]{#4}}}
\newtheorem{thm}{Theorem}[section]
\newtheorem{propn}[thm]{Proposition}
\newtheorem{lemma}[thm]{Lemma}
\newtheorem{cor}[thm]{Corollary}
\newtheorem{defn}[thm]{Definition}
\newtheorem{example}[thm]{Example}
\newtheorem{examples}[thm]{Examples}
\newtheorem{remark}[thm]{Remark}
\newtheorem{remarks}[thm]{Remarks}
\theoremstyle{nonumberplain}
\newtheorem{proof}{Proof}
\author{Tom Leinster%
\thanks{School of Mathematics, University of Edinburgh,
Edinburgh EH9 3JZ, 
United Kingdom; 
Tom.Leinster@ed.ac.uk.}}  
\title{The magnitude of a graph}
\date{}
\begin{document}

\sloppy
\maketitle

\begin{abstract}
The magnitude of a graph is one of a family of cardinality-like invariants
extending across mathematics; it is a cousin to Euler characteristic and
geometric measure.  Among its cardinality-like properties are
multiplicativity with respect to cartesian product and an
inclusion-exclusion formula for the magnitude of a union.  Formally, the
magnitude of a graph is both a rational function over $\rationals$ and a
power series over $\integers$.  It shares features with one of the most
important of all graph invariants, the Tutte polynomial; for instance,
magnitude is invariant under Whitney twists when the points of
identification are adjacent.  Nevertheless, the magnitude of a graph is not
determined by its Tutte polynomial, nor even by its cycle matroid, and it
therefore carries information that they do not.
\end{abstract}

\section{Introduction}

\smallskip 
\begin{center}%
\parbox{.85\textwidth}{%
\emph{The analogy\ldots\ the two theories, their conflicts and their
  delicious reciprocal reflections, their furtive caresses, their
  inexplicable quarrels\ldots\ Nothing is more fecund than these slightly
  adulterous relationships.}

\hfill Andr\'e Weil~\cite{Weil}}%
\end{center}
\smallskip

\noindent

In many fields of mathematics, there is a canonical measure of size.  Sets
have cardinality, vector spaces have dimension, and topological spaces have
Euler characteristic (whose status as the topological analogue of
cardinality was made explicit by Schanuel~\cite{SchaNSE}).  Convex subsets
of $\reals^n$ have, in fact, one cardinality-like invariant of each
dimension between $0$ and $n$: the intrinsic volumes~\cite{KlRo}, which
when $n = 2$ are the Euler characteristic, perimeter and area.

Many of these cardinality-like invariants arise from a single general
definition.  This general invariant is called magnitude, and here we
investigate its behaviour in the case of graphs.

The full definition of magnitude is framed in the very wide generality of
enriched categories~\cite{MMS}.  Although we will not need that general
definition here, it is instructive to look briefly at how it specializes to
various branches of mathematics, to give context to what we will do for
graphs.

First, one type of enriched category is an ordinary category, and magnitude
of categories (also called Euler characteristic) is very closely linked to
topological Euler characteristic \cite[Propositions~2.11 and~2.12]{ECC}.
The theory of magnitude of categories also extends the theory of
M\"obius inversion in posets, made famous by Rota for its applications in
enumerative combinatorics~\cite{Rota}.

A second type of enriched category arises commonly in algebra, where one
often encounters categories that are `linear' in the sense that their
hom-sets are vector spaces.  In the representation theory of associative
algebras $A$, an important role is played by the linear category of
indecomposable projective $A$-modules.  What is its magnitude?  Under
suitable hypotheses, it is a recognizable homological invariant of $A$.
Specifically, it is $\chi_A(S, S)$, where $\chi_A$ is the Euler form of $A$
and $S$ is the direct sum of the simple $A$-modules~\cite{MFDA}.

Metric spaces provide a third context for
magnitude~\cite{MMS,AMSES,MeckPDM}.  These too can be seen as enriched
categories, and metric magnitude is a previously undiscovered invariant
that appears to encode many classical quantities.  For example, given a
compact subset $X \sub \reals^n$, write $tX = \{tx \such x \in X\}$ and
$|tX|$ for its magnitude.  Meckes has shown that that the asymptotic growth
of $|tX|$ as $t \to \infty$ is equal to the Minkowski dimension of $X$
\cite[Corollary~7.4]{MeckMDC}.  Moreover, a conjecture of Leinster and
Willerton (\cite{AMSES} and \cite[Conjecture~3.5.10]{MMS}) states that when
$X$ is also convex, $|tX|$ is a polynomial in $t$ whose coefficients are
(up to known scale factors) the intrinsic volumes of $X$: the Euler
characteristic, mean width, surface area, volume, and so on.  The magnitude
of metric spaces is also closely related to certain measures of entropy and
of biological diversity~\cite{METAMB}, and admits a further
potential-theoretic interpretation~\cite{MeckMDC}.

Graphs are metric spaces, with distance between vertices measured as the
length of a shortest path.  Among their special properties is that
distances are integers.  As we shall see, this has the consequence that for
a graph $G$, the magnitude $|tG|$ is a rational function of $q = e^{-t}$
over $\rationals$.  (It can also be expressed as a power series in $q$ over
$\integers$.)  We write it as $\mg{G} = \mg{G}(q)$ to avoid confusion with
the usage of $|G|$ for the number of vertices of $G$, while still evoking
the analogy with cardinality.

Among the cardinality-like properties of magnitude are that 
\begin{equation}        
\label{eq:cp-intro}
\mgb{G \cp H} = \mg{G} \cdot \mg{H}
\end{equation}
where $\cp$ denotes the cartesian product of graphs (defined below), and
that 
\begin{equation}        
\label{eq:union-intro}
\mgb{G \cup H} = \mg{G} + \mg{H} - \mgb{G \cap H}
\end{equation}
under certain hypotheses.  The trivial invariant `number of vertices' also
satisfies these equations, and indeed, the number of vertices of $G$ can be
recovered from its magnitude as $\mg{G}(0)$; but of course, magnitude
is much more informative than that.

The information conveyed by magnitude appears to be quite different from
that conveyed by existing graph invariants.  For instance, the Tutte
polynomial~\cite{Tutt} is perhaps the most important graph invariant of
all, and many other graph invariants are specializations of it, but
magnitude is not; it is not even determined by the graph's cycle matroid.
This is trivial for disconnected graphs, since the graph with $n$ vertices
and no edges has magnitude $n$ but Tutte polynomial $1$ and trivial cycle
matroid.  However, magnitude is not a specialization of the Tutte
polynomial even for connected graphs.  For example, the graphs
\[
\cen{\includegraphics[width=4em]{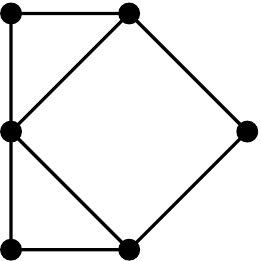}}
\qquad
\cen{\includegraphics[width=4em]{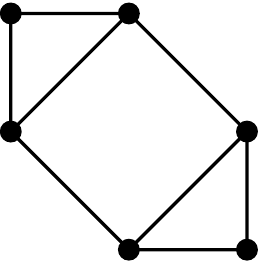}}
\]
have the same cycle matroid, hence also the same Tutte polynomial, the same
number of proper vertex colourings by any given number of colours, the same
number of spanning trees, the same connectivity, the same girth, etc.; but
their magnitudes are different.  Conversely, there are graphs with the same
magnitude that are easily distinguished by well-known graph invariants
(Example~\ref{eg:Ks-same-mag}).  In that sense, magnitude seems to capture
genuinely new aspects of a graph, at the same time as having uniquely good
cardinality-like properties.

We prove two main theorems.  The first is the inclusion-exclusion
formula~\eqref{eq:union-intro} (Theorem~\ref{thm:pjn-union}).  For this we
must impose some hypotheses.  Indeed,
Lemma~\ref{lemma:cant-have-everything} shows that there is \emph{no}
nontrivial graph invariant that is fully cardinality-like in the sense of
satisfying both~\eqref{eq:cp-intro} and~\eqref{eq:union-intro} without
restriction.  But the hypotheses we impose are mild enough to include, for
instance, the case where all the graphs involved are trees, and the case
where $G \cap H$ consists of a single vertex.

It follows that when we join a vertex of a graph $G$ to a vertex of a graph
$H$ to form a new graph $G \jn H$, the magnitude of $G \jn H$ depends only
on the magnitudes of $G$ and $H$, not the vertices chosen.  This is an
invariance property that magnitude shares with the Tutte polynomial.
Another important property of the Tutte polynomial is invariance under
Whitney twists (Figure~\ref{fig:Whitney}, page~\pageref{fig:Whitney}).
This means the following: given graphs $G$ and $H$, each with two chosen,
distinct vertices, we may form new graphs $X$ and $Y$ by gluing $G$ to $H$
at the chosen vertices one way round or the other; then the Tutte
polynomials of $X$ and $Y$ are equal.  Our second main theorem
(Theorem~\ref{thm:Whitney}) is that this is also true for magnitude,
provided that there is an edge between the chosen vertices of either $G$ or
$H$.

Speyer and Willerton showed that even in the case of connected graphs, this
last hypothesis cannot be dropped~\cite{SpeyTPMF,WillTPMF}.  It follows
that magnitude is not a specialization of the Tutte polynomial.

This paper is laid out as follows.  In Section~\ref{sec:defn}, we define
the magnitude of a graph, expressing it as both a rational function and a
power series over $\integers$.  Section~\ref{sec:basic} sets out the most
basic properties and examples of magnitude, including a simple formula for
the magnitude of any graph whose automorphism group acts transitively on
vertices.  We prove that magnitude has some basic cardinality-like
properties.  Viewing $\mg{G}$ as a power series over $\integers$, we also
answer the question: what do the coefficients count?

The remaining two sections prove the two main results: the
inclusion-exclusion theorem (Section~\ref{sec:union}) and the theorem on
invariance under Whitney twists (Section~\ref{sec:Whitney}).  Although both
concern the magnitude of the union of two graphs, the latter is not a
special case of the former, as noted after the statement of
Theorem~\ref{thm:Whitney}.

Recent work of Hepworth and Willerton (in preparation) defines a homology
theory of graphs, of which magnitude is the Euler characteristic.  Their
homology theory is a categorification of graph magnitude in the same sense
that Khovanov's homology theory of knots is a categorification of the Jones
polynomial~\cite{Khov}.  For example, the multiplicativity
property~\eqref{eq:cp-intro} for magnitude of graphs can be derived from a
K\"unneth theorem for magnitude homology of graphs, and similarly, the
inclusion-exclusion formula for magnitude (Theorem~\ref{thm:pjn-union})
lifts to a Mayer--Vietoris theorem in homology.

\section{The definition}
\label{sec:defn}

Here we define the magnitude of a graph, showing that it can be expressed
as either a rational function over $\rationals$ or a power series over
$\integers$.  We also show how to calculate magnitude.

Our conventions are these.  A \demph{graph} is a finite, undirected graph
with no loops or multiple edges.  Graphs may be disconnected or even have
isolated vertices.  Given a graph $G$, we write $V(G)$ for the set of
vertices, $E(G)$ for the set of edges, $v(G)$ for the order of $G$ (the
number of vertices), $e(G)$ for its size (the number of edges), and $k(G)$
for the number of connected-components.  We write $x \in G$ for $x \in
V(G)$.

For vertices $x$ and $y$ of a graph $G$, let $d_G(x, y)$ or $d(x, y)$
denote the length of a shortest path between $x$ and $y$, taken to be
$\infty$ if there is no such path.  This defines a metric on the set of
vertices, provided that we relax the definition of metric space to allow
$\infty$ as a distance.

We now define the magnitude of a graph $G$.  Write $\integers[q]$ for the
polynomial ring over the integers in one variable $q$.  Let $Z_G = Z_G(q)$
be the square matrix over $\integers[q]$ whose rows and columns are indexed
by the vertices of $G$, and whose $(x, y)$-entry is
\[
Z_G(q)(x, y) = q^{d(x, y)}
\]
($x, y \in G$), where by convention $q^\infty = 0$.  Since $Z_G(0)$ is the
identity matrix, the polynomial $\det(Z_G(q))$ has
constant term $1$.  In particular, $\det(Z_G(q))$ is nonzero in the field
$\rationals(q)$ of rational functions over $\rationals$, and so is
invertible there.  It follows that $Z_G(q)$ is invertible as a matrix over
$\rationals(q)$.

\begin{defn}
The \demph{magnitude} of a graph $G$ is
\[
\mg{G}(q) 
=
\sum_{x, y \in G} (Z_G(q))^{-1}(x,y)
\in 
\rationals(q).
\]
We usually abbreviate $\mg{G}(q)$ as $\mg{G}$.
\end{defn}

Writing $\sm(M)$ for the sum of all the entries of a matrix $M$, and
$\adj(M)$ for the adjugate of $M$, we have
\begin{equation}        
\label{eq:mag-frac}
\mg{G}(q)
=
\sm\bigl(Z_G(q)^{-1}\bigr)
=
\frac{\sm\bigl(\adj(Z_G(q))\bigr)}{\det(Z_G(q))}.
\end{equation}
Both the numerator and the denominator are polynomials in $q$ over
$\integers$.

Any rational function over $\rationals$ can be expanded as a Laurent series
over $\rationals$, but $\mg{G}$ has the special property that it is a power
series over $\integers$.  This follows from equation~\eqref{eq:mag-frac},
since the polynomial $\det(Z_G(q))$ has constant term $1$ and is therefore
invertible in the ring $\integers\pwr{q}$ of power series.

(Formally, both $\rationals(q)$ and $\integers\pwr{q}$ are subrings of
$\rationals\Lau{q}$, the ring of Laurent series over $\rationals$.  When we
speak of a rational function being equal to a power series, this means
equality as elements of $\rationals\Lau{q}$.)

\begin{remarks}
\begin{enumerate}
\item
As explained in the introduction, this apparently unmotivated definition is
a special case of the very general definition of the magnitude of an
enriched category \cite[Section~1]{MMS}, which in other contexts produces
a variety of fundamental and classical invariants of size.  

\item
The definition of magnitude also makes sense for directed graphs, with
distance defined non-symmetrically in terms of directed paths.  For
simplicity, we confine ourselves to the undirected case.
\end{enumerate}
\end{remarks}

The magnitude of $G$ is the sum of all the entries of $Z_G(q)^{-1}$, but
it is sometimes useful to consider the individual row-sums.  We define
the \demph{weight} $w_G(x) = w_G(q)(x)$ of a vertex $x$ to be the
corresponding row-sum:
\[
w_G(x) 
= 
\sum_{y \in G} (Z_G(q))^{-1}(x, y) 
\in 
\rationals(q).
\]
The function $w_G\from V(G) \to \rationals(q)$ is called the
\demph{weighting} on $G$, and satisfies the \demph{weighting equations}
\begin{equation}        
\label{eq:wtg}
\sum_{y \in G} q^{d(x, y)} w_G(y) = 1
\qquad
(x \in G).
\end{equation}
(The weighting can alternatively be understood as taking values in
$\integers\pwr{q}$, just as for magnitude itself.)  Magnitude is total
weight: $\mg{G} = \sum_{x \in G} w_G(x)$.  This is loosely analogous to the
Gauss--Bonnet formula for the Euler characteristic of a surface, with
weight playing the role of curvature \cite[Section~2]{ECC}.

We can calculate the magnitude of a graph by finding some function
$\twid{w}_G$ on $V(G)$ satisfying the weighting equations~\eqref{eq:wtg}:

\begin{lemma}   
\label{lemma:wtg-mag}
Let $G$ be a graph and let $\twid{w}_G\from V(G) \to \rationals(q)$ be a
function satisfying the weighting equations.  Then $\twid{w}_G = w_G$ and
$\mg{G} = \sum_{x \in G} \twid{w}_G(x)$.  The same is true when
$\rationals(q)$ is replaced by $\integers\pwr{q}$.
\end{lemma}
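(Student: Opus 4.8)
The plan is to recognize the weighting equations as a single matrix identity and then appeal to the invertibility of $Z_G$ that was already established. Write $\mathbf{1}$ for the column vector indexed by $V(G)$ with every entry equal to $1$, and regard $\twid{w}_G$ as a column vector over $\rationals(q)$. Then the weighting equations~\eqref{eq:wtg} say exactly that $Z_G \twid{w}_G = \mathbf{1}$. Since $\det(Z_G(q))$ has constant term $1$, it is nonzero in $\rationals(q)$, so $Z_G$ is invertible over $\rationals(q)$; multiplying on the left by $Z_G^{-1}$ gives $\twid{w}_G = Z_G^{-1}\mathbf{1}$. But the $x$-entry of $Z_G^{-1}\mathbf{1}$ is $\sum_{y \in G}(Z_G^{-1})(x,y) = w_G(x)$, so $\twid{w}_G = w_G$.

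For the statement about total weight, I would then compute $\sum_{x \in G}\twid{w}_G(x) = \mathbf{1}^{\mathsf T} Z_G^{-1} \mathbf{1} = \sm(Z_G^{-1}) = \mg{G}$, the last equality being the definition of magnitude. Equivalently, one can skip this and simply note that $\twid{w}_G = w_G$ together with $\mg{G} = \sum_{x \in G} w_G(x)$ immediately yields the claim.

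For the version over $\integers\pwr{q}$, the only additional point is that $Z_G$ is invertible as a matrix over $\integers\pwr{q}$: its entries lie in $\integers[q] \sub \integers\pwr{q}$, and since $\det(Z_G)$ has constant term $1$ it is a unit in $\integers\pwr{q}$, so the adjugate formula $Z_G^{-1} = \det(Z_G)^{-1}\adj(Z_G)$ exhibits an inverse with entries in $\integers\pwr{q}$. This is the same matrix as the inverse over $\rationals(q)$, both being computed inside $\rationals\Lau{q}$, so if $\twid{w}_G$ takes values in $\integers\pwr{q}$ and satisfies the weighting equations, the identical argument gives $\twid{w}_G = Z_G^{-1}\mathbf{1} = w_G$ and $\sum_{x \in G}\twid{w}_G(x) = \mg{G}$.

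I expect no real obstacle: the entire content is the observation that the weighting equations form the linear system $Z_G \twid{w}_G = \mathbf{1}$, combined with the already-noted invertibility of $Z_G$ over both $\rationals(q)$ and $\integers\pwr{q}$. The only step needing a word of care is identifying the two notions of inverse inside $\rationals\Lau{q}$, which the parenthetical remark following~\eqref{eq:mag-frac} has already prepared.
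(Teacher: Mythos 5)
Your proposal is correct and follows essentially the same route as the paper: the paper's proof simply observes that invertibility of $Z_G$ over $\rationals(q)$ (and over $\integers\pwr{q}$) makes $w_G$ the unique solution of the weighting equations, which is exactly your argument written out in matrix form. The extra detail you supply on invertibility over $\integers\pwr{q}$ via the adjugate and on identifying the two inverses inside $\rationals\Lau{q}$ is sound and merely makes explicit what the paper leaves implicit.
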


\begin{proof}
The matrix $Z_G(q)$ is invertible over $\rationals(q)$, so $w_G\from V(G)
\to \rationals(q)$ is the unique solution to the weighting equations.
Hence $\twid{w}_G = w_G$, giving the result.  The same argument applies
over $\integers\pwr{q}$.
\end{proof}

\section{Basic properties and examples}
\label{sec:basic}

Here we state the most basic facts about magnitude.  We derive formulas for
the magnitudes of vertex-transitive and complete bipartite graphs.  We also
encounter the first pieces of evidence that magnitude of graphs is
analogous to cardinality of sets, proving that magnitude has additivity and
multiplicativity properties similar to those enjoyed by cardinality.

When the magnitude of a graph is expressed as a power series, its
coefficients are integers.  We give a formula for them.  From this it will
follow that the magnitude of a graph determines its order and size.  On the
other hand, it determines neither the chromatic number nor the number of
connected-components, as we show.

We begin with the simplest of examples.

\begin{example} 
\label{eg:edgeless}
Let $G$ be a graph with no edges.  Then $Z_G$ is the identity matrix, so
$\mg{G}$ is the order $v(G)$.  This fits with the conception of magnitude
as generalized cardinality: when a graph has no edges, it is essentially
just a set, and magnitude then reduces to cardinality.
\end{example}

It follows that magnitude is not a specialization of the Tutte polynomial,
since the Tutte polynomial of any edgeless graph is $1$.  Less obvious is
that magnitude is not a specialization of the Tutte polynomial for
\emph{connected} graphs.  We prove this in Section~\ref{sec:Whitney}.

A graph is \demph{vertex-transitive} if its automorphism group acts
transitively on vertices.  The following result is a special case of
\cite[Proposition~2.1.5]{MMS}.

\begin{lemma}[Speyer]       
\label{lemma:trans}
Let $G$ be a vertex-transitive graph.  Then
\[
\mg{G}(q)
=
\frac{v(G)}{\sum_{x \in G} q^{d(g, x)}}
\]
for any $g \in G$.
\end{lemma}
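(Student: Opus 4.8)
The plan is to exhibit a weighting explicitly and then appeal to Lemma~\ref{lemma:wtg-mag}. First I would record the key consequence of vertex-transitivity: the polynomial $S(q) = \sum_{x \in G} q^{d(g,x)}$ does not depend on the choice of $g \in G$. Indeed, if $\sigma$ is an automorphism of $G$ with $\sigma(g) = g'$, then $d(g', \sigma x) = d(g,x)$ for all $x$, and $\sigma$ permutes $V(G)$, so $\sum_{x} q^{d(g',x)} = \sum_{x} q^{d(g',\sigma x)} = \sum_{x} q^{d(g,x)}$. Moreover $S(q)$ has constant term $1$: the term $x = g$ contributes $q^{d(g,g)} = q^0 = 1$, while every other term is divisible by $q$. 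Hence $S(q)$ is invertible in $\integers\pwr{q}$ and nonzero in $\rationals(q)$, so $1/S(q)$ makes sense in either ring.

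Next I would guess that the weighting is constant, setting $\twid{w}_G(x) = 1/S(q)$ for every $x \in G$. To verify the weighting equations~\eqref{eq:wtg}, fix $x \in G$; then $\sum_{y \in G} q^{d(x,y)} \twid{w}_G(y) = \frac{1}{S(q)} \sum_{y \in G} q^{d(x,y)} = \frac{1}{S(q)} \cdot S(q) = 1$, where the middle equality uses the choice-independence of $S$ with $g$ taken to be $x$. By Lemma~\ref{lemma:wtg-mag}, $\twid{w}_G$ is the weighting $w_G$, and therefore $\mg{G} = \sum_{x \in G} \twid{w}_G(x) = v(G)/S(q)$, which is exactly the claimed formula. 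The same computation carried out over $\integers\pwr{q}$ simultaneously gives the power-series version.

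I do not expect any real obstacle here: once the right weighting is guessed, the verification is a one-line calculation. The only genuine content is the observation that vertex-transitivity forces $S(q)$ to be independent of the basepoint $g$ (equivalently, that symmetry makes a constant weighting possible), together with the routine bookkeeping that $1/S(q)$ lies in both $\rationals(q)$ and $\integers\pwr{q}$ so that both forms of Lemma~\ref{lemma:wtg-mag} apply.
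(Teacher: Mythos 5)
Your proposal is correct and follows exactly the same route as the paper: vertex-transitivity makes $S(q) = \sum_{x \in G} q^{d(g,x)}$ independent of $g$, so the constant function $1/S(q)$ satisfies the weighting equations and Lemma~\ref{lemma:wtg-mag} gives the result. The extra checks you include (constant term $1$, invertibility in $\integers\pwr{q}$) are fine but not needed for the statement as given, since the paper only asserts the identity in $\rationals(q)$.
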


\begin{proof}
By transitivity, the sum $s(q) = \sum_{x \in G} q^{d(g, x)}$ is independent
of $g$.  The result follows by applying Lemma~\ref{lemma:wtg-mag} with
$\twid{w}_G(x) = 1/s$ for all $x \in G$.
\end{proof}

In particular, the diameter of a connected vertex-transitive graph can be
recovered as the degree of its magnitude.  

The denominator of the expression in Lemma~\ref{lemma:trans} closely
resembles the weight enumerator of a linear code, a connection discussed in
\cite[Example~2.3.7]{MMS}.

\begin{examples}        
\label{egs:trans}
\begin{enumerate}
\item   
\label{eg:complete}
By Lemma~\ref{lemma:trans}, the complete graph $K_n$ on $n$ vertices has
magnitude
\[
\mg{K_n}
=
\frac{n}{1 + (n - 1)q}
=
n \sum_{k = 0}^\infty (1 - n)^k q^k.
\]

\item   
\label{eg:cycle}
Similarly, the cycle graph $C_n$ on $n$ vertices has magnitude
\begin{align*}
\mg{C_n}        &
=
\frac{n(q - 1)}%
{q^{\lfloor (n + 1)/2 \rfloor} + q^{\lceil (n + 1)/2 \rceil} - q - 1} %\\
%&
=
\begin{cases}
\frac{n(q - 1)}{(q^{n/2} - 1)(q + 1)}          &\text{if } n \text{ is
  even,} 
\\[.8ex]
\frac{n(q - 1)}{2q^{(n + 1)/2} - q - 1}        &\text{if } n \text{ is odd.}
\end{cases}
\end{align*}
These equations hold for all $n \geq 1$, interpreting $C_1$ as the graph
with just one vertex and $C_2$ as the graph with just one edge.

\item   
\label{eg:Petersen}
The Petersen graph (shown) is also vertex-transitive, so has magnitude as
follows: 
\[
\begin{array}{c}
\includegraphics[height=15mm]{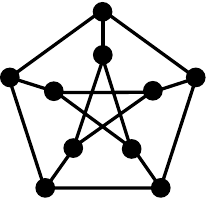}
\end{array}
\qquad
\frac{10}{1 + 3q + 6q^2}
=
10 - 30q + 30q^2 + 90q^3 - 450q^4 + \cdots.
\]
\end{enumerate}
\end{examples}

\begin{example} 
\label{eg:comp-bi}
By direct calculation using Lemma~\ref{lemma:wtg-mag}, the complete
bipartite graph $K_{m, n}$ has magnitude
\[
\mg{K_{m, n}}
=
\frac{(m + n) - (2mn - m - n)q}{(1 + q)(1 - (m - 1)(n - 1)q^2)}.
\]
\end{example}

The cardinality of a disjoint union of sets is the sum of their individual
cardinalities.  The same is true of the magnitude of graphs:

\begin{lemma}   
\label{lemma:coproduct}
Let $G$ and $H$ be graphs.  The magnitude of their disjoint union $G \du H$
is given by $\mgb{G \du H} = \mg{G} + \mg{H}$.
\end{lemma}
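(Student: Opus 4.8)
The plan is to exploit Lemma~\ref{lemma:wtg-mag}: rather than computing $Z_{G \du H}^{-1}$ directly, I will exhibit a weighting on $G \du H$ by gluing together the weightings on $G$ and on $H$, and then read off the magnitude as the total weight. The key structural observation is that in $G \du H$ there is no path between a vertex of $G$ and a vertex of $H$, so $d_{G \du H}(x, y) = \infty$ whenever $x, y$ lie in different summands, whence $q^{d(x,y)} = 0$ there. Consequently the matrix $Z_{G \du H}(q)$ is block diagonal, with blocks $Z_G(q)$ and $Z_H(q)$; and for a vertex $x \in G \subseteq G \du H$ we have $d_{G \du H}(x, y) = d_G(x, y)$ for $y \in G$.

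The main steps are as follows. First I would define $\twid{w}\from V(G \du H) \to \rationals(q)$ by $\twid{w}(x) = w_G(x)$ for $x \in V(G)$ and $\twid{w}(x) = w_H(x)$ for $x \in V(H)$, using the identification $V(G \du H) = V(G) \du V(H)$. Then I would verify the weighting equations~\eqref{eq:wtg} for $\twid{w}$: for $x \in V(G)$,
\[
\sum_{y \in G \du H} q^{d_{G \du H}(x, y)} \twid{w}(y)
=
\sum_{y \in G} q^{d_G(x, y)} w_G(y) + \sum_{y \in H} 0 \cdot w_H(y)
=
1,
\]
the last equality being the weighting equation for $w_G$ on $G$; the case $x \in V(H)$ is symmetric. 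By Lemma~\ref{lemma:wtg-mag}, $\twid{w}$ is \emph{the} weighting $w_{G \du H}$, and
\[
\mgb{G \du H}
=
\sum_{x \in G \du H} \twid{w}(x)
=
\sum_{x \in G} w_G(x) + \sum_{x \in H} w_H(x)
=
\mg{G} + \mg{H}.
\]

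Honestly, there is no hard part here — the only thing to be careful about is the bookkeeping with $\infty$ and the convention $q^\infty = 0$, i.e.\ making sure the reader agrees that disjoint union really does force all cross-distances to be infinite, and hence that the off-diagonal blocks of $Z_{G \du H}$ vanish. An alternative, essentially equivalent, route would be to argue directly that a block-diagonal matrix has block-diagonal inverse, so $Z_{G \du H}(q)^{-1}$ is block diagonal with blocks $Z_G(q)^{-1}$ and $Z_H(q)^{-1}$, and then sum all entries; but the weighting argument via Lemma~\ref{lemma:wtg-mag} is cleaner and avoids having to invoke anything about inverses of block matrices. Either way the statement also holds over $\integers\pwr{q}$, by the corresponding clause of Lemma~\ref{lemma:wtg-mag}.
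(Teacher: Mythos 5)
Your argument is correct, but it takes a different route from the paper's. The paper's entire proof is the observation that $Z_{G \du H}$ is the block sum of $Z_G$ and $Z_H$ --- i.e.\ exactly the ``alternative'' you mention at the end: a block-diagonal matrix has block-diagonal inverse with blocks $Z_G^{-1}$ and $Z_H^{-1}$, and summing all entries gives $\mg{G} + \mg{H}$. Your primary argument instead glues the two weightings together and verifies the weighting equations~\eqref{eq:wtg}, invoking Lemma~\ref{lemma:wtg-mag}. Both are sound, and the key fact underlying each is the same: all cross-distances in $G \du H$ are infinite, so $q^{d(x,y)} = 0$ off the diagonal blocks. What your route buys is uniformity --- it is the same pattern the paper uses for Lemma~\ref{lemma:trans}, Proposition~\ref{propn:series} and Theorem~\ref{thm:pjn-union}, and it sidesteps any appeal to facts about inverses of block matrices; what the paper's route buys is brevity, since the block-sum observation makes the result immediate. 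Your closing remark that the argument also works over $\integers\pwr{q}$ is a correct use of the corresponding clause of Lemma~\ref{lemma:wtg-mag}.
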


\begin{proof}
$Z_{G \du H}$ is the block sum of $Z_G$ and $Z_H$, and the result follows.
\end{proof}

The cardinality of a cartesian product of sets is the product of their
cardinalities, and again, there is an analogous result for the magnitude of
graphs.  Recall that the \demph{cartesian product} $G \cp H$ has $V(G)
\times V(H)$ as its vertex-set, with an edge between $(x, y)$ and $(x',
y')$ if \emph{either} $x = x'$ and $\{y, y'\} \in E(H)$ \emph{or} $y = y'$
and $\{x, x'\} \in E(G)$.

\begin{lemma}  
\label{lemma:cart}
Let $G$ and $H$ be graphs.  The magnitude of their cartesian product is
given by $\mgb{G \cp H} = \mg{G} \cdot \mg{H}$.
\end{lemma}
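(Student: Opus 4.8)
The plan is to reduce the claim to a statement about distances and weightings, using Lemma~\ref{lemma:wtg-mag} as the main tool. The key observation is that distances in a cartesian product add: for vertices $(x,y)$ and $(x',y')$ of $G \cp H$, one has $d_{G \cp H}((x,y),(x',y')) = d_G(x,x') + d_H(y,y')$. This is a standard fact, but I would record a short justification: any path in $G \cp H$ projects to walks in $G$ and $H$ whose lengths sum to the length of the original path, giving $\geq$; conversely, concatenating a geodesic in the first coordinate with one in the second realizes the sum, giving $\leq$. (The convention $q^\infty = 0$ together with $\infty + n = \infty$ makes this work even when $G$ or $H$ is disconnected.) Consequently $Z_{G \cp H}(q)(x,y),(x',y')) = q^{d_G(x,x')} \cdot q^{d_H(y,y')} = Z_G(q)(x,x') \cdot Z_H(q)(y,y')$, i.e. $Z_{G \cp H}$ is the Kronecker (tensor) product $Z_G \otimes Z_H$.

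From here the proof is essentially a one-line verification via the weighting equations. Define $\twid{w}_{G \cp H}(x,y) = w_G(x) \cdot w_H(y)$, a function $V(G) \times V(H) \to \rationals(q)$. Then for any $(x,y) \in G \cp H$,
\[
\sum_{(x',y')} q^{d_{G \cp H}((x,y),(x',y'))} \twid{w}_{G \cp H}(x',y')
=
\Bigl( \sum_{x'} q^{d_G(x,x')} w_G(x') \Bigr)
\Bigl( \sum_{y'} q^{d_H(y,y')} w_H(y') \Bigr)
= 1 \cdot 1 = 1,
\]
where the middle equality uses the distance-additivity and the factorization of the double sum, and the last uses that $w_G$ and $w_H$ satisfy the weighting equations~\eqref{eq:wtg}. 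So $\twid{w}_{G \cp H}$ satisfies the weighting equations for $G \cp H$, and Lemma~\ref{lemma:wtg-mag} gives $w_{G \cp H}(x,y) = w_G(x) w_H(y)$ and
\[
\mgb{G \cp H} = \sum_{(x,y)} w_G(x) w_H(y) = \Bigl(\sum_x w_G(x)\Bigr)\Bigl(\sum_y w_H(y)\Bigr) = \mg{G} \cdot \mg{H}.
\]

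I do not anticipate a serious obstacle here: the only point needing any care is the distance-additivity in $G \cp H$, and even that is routine once one is careful about the $\infty$ convention. An alternative route would bypass weightings entirely: since $Z_{G \cp H} = Z_G \otimes Z_H$, we get $Z_{G \cp H}^{-1} = Z_G^{-1} \otimes Z_H^{-1}$ (valid over $\rationals(q)$ since both factors are invertible there), and $\sm(A \otimes B) = \sm(A)\sm(B)$ for any matrices; applying this with $A = Z_G^{-1}$, $B = Z_H^{-1}$ and invoking~\eqref{eq:mag-frac} gives the result immediately. I would probably present the weighting-equations argument as the main proof, since it parallels the proof of Lemma~\ref{lemma:trans} and keeps the paper's emphasis on weightings, but mention the tensor-product viewpoint as the conceptual reason the identity holds.
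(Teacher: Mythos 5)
Your proof is correct, but your main argument takes a different route from the paper's. The paper's proof is exactly the ``alternative'' you mention at the end: it observes that $d_{G \cp H}((x,y),(x',y')) = d_G(x,x') + d_H(y,y')$, so that $Z_{G \cp H}$ is the Kronecker product of $Z_G$ and $Z_H$, hence $Z_{G \cp H}^{-1} = Z_G^{-1} \otimes Z_H^{-1}$, and the result follows by summing entries. Your primary argument instead verifies directly that $\twid{w}_{G \cp H}(x,y) = w_G(x)\,w_H(y)$ satisfies the weighting equations and invokes Lemma~\ref{lemma:wtg-mag}; this is sound (and your care with the $q^\infty = 0$ convention under the factorization $q^{a+b} = q^a q^b$ is exactly the point that needs checking). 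The two proofs are close in substance --- your weighting computation is essentially the row-sum of the entrywise verification that $Z_G^{-1} \otimes Z_H^{-1}$ inverts $Z_G \otimes Z_H$ --- but they buy slightly different things: the paper's version is shorter, outsourcing the work to the standard Kronecker-product identities, while yours is self-contained modulo Lemma~\ref{lemma:wtg-mag}, parallels the proofs of Lemmas~\ref{lemma:trans} and~\ref{lemma:pjn-wtgs}, and records the stronger fact that the weighting on $G \cp H$ is the product of the weightings on $G$ and $H$, which the paper's proof does not make explicit.
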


\begin{proof}
For $x, x' \in G$ and $y, y' \in H$,
\[
d_{G \cp H}((x, y), (x', y')) = d_G(x, x') + d_H(y, y')
\]
and so
\[
Z_{G \cp H}((x, y), (x', y')) = Z_G(x, x') \cdot Z_H(y, y').
\]
So $Z_{G \cp H}$ is the Kronecker product of $Z_G$ and $Z_H$, which implies
that $Z_{G \cp H}^{-1}$ is the Kronecker product of $Z_G^{-1}$ and
$Z_H^{-1}$.  The result follows.
\end{proof}

\begin{example} 
\label{eg:Ks-same-mag}
By Example~\ref{egs:trans}(\ref{eg:complete}) and Lemma~\ref{lemma:cart},
\[
\mgb{K_2 \cp K_3}
=
\mg{K_2} \cdot \mg{K_3}
=
\frac{2}{1 + q} \cdot \frac{3}{1 + 2q} 
=
\frac{6}{1 + 3q + 2q^2}.
\]
So by Example~\ref{eg:comp-bi}, $K_2 \cp K_3$ has the same magnitude as
$K_{3, 3}$, a graph with a different chromatic number.  The chromatic
number cannot, therefore, be derived from the magnitude, even for connected
graphs; hence the Tutte polynomial cannot be either.  We prove the converse
in Section~\ref{sec:Whitney}.
\end{example}

\begin{remarks}
\begin{enumerate}
\item
There is an unfortunate clash of terminology for graph products.  For any
symmetric monoidal category $\cat{V}$, the category of $\cat{V}$-enriched
categories carries a tensor product \cite[Section~1.4]{KellBCE}.  Taking
$\cat{V} = (\nat, \geq, +, 0)$, this gives a tensor product of graphs,
which is what graph theorists call the cartesian product, $\cp$.  On the
other hand, the category of graphs also has what category theorists call a
product, or, for emphasis, a cartesian product; this is what graph
theorists sometimes call the tensor product,
$\times$~\cite[Section~6.3]{GoRo}.

\item
Neither the magnitude of $G \times H$ nor that of the strong product $G
\strongprod H$ \cite[Section~7.15]{GoRo} is determined by the magnitudes of
$G$ and $H$.  Indeed, by Example~\ref{eg:Ks-same-mag}, it is enough to show
that
\[
\mgb{K_2 \times (K_2 \cp K_3)} \neq \mgb{K_2 \times K_{3, 3}},
\qquad
\mgb{K_2 \strongprod (K_2 \cp K_3)} \neq \mgb{K_2 \strongprod K_{3, 3}},
\]
and this is easily done using Lemma~\ref{lemma:trans}.
\end{enumerate}
\end{remarks}

We saw in Section~\ref{sec:defn} that the magnitude of a graph can be
expressed as a power series with integer coefficients.  Those coefficients
can be described explicitly:

\begin{propn}   
\label{propn:series}
For any graph $G$,
\[
\mg{G}(q)
= 
\sum_{k = 0}^\infty (-1)^k 
\sum_{x_0 \neq x_1 \neq \cdots \neq x_k} 
q^{d(x_0, x_1) + \cdots + d(x_{k - 1}, x_k)}
\in 
\integers\pwr{q}
\]
where $x_0, \ldots, x_k$ denote vertices of $G$.  That is, writing
$\mg{G}(q) = \sum_{n = 0}^\infty c_n q^n \in \integers\pwr{q}$,
\[
c_n 
= 
\sum_{k = 0}^n (-1)^k 
\bigcard{\bigl\{(x_0, \ldots, x_k) \such x_0 \neq x_1 \neq \cdots \neq x_k,\ 
d(x_0, x_1) + \cdots + d(x_{k - 1}, x_k) = n\bigr\}}.
\]
\end{propn}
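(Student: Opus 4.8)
The plan is to expand $Z_G(q)^{-1}$ as a Neumann series in powers of $q$ and read off coefficients. Write $Z_G = I + N$, where $I$ is the identity matrix and $N = N(q)$ is the matrix with $(x,y)$-entry $q^{d(x,y)}$ for $x \neq y$ and $0$ for $x = y$. Since $d(x, y) \geq 1$ whenever $x \neq y$, every entry of $N$ lies in the ideal $q\integers[q]$, and hence every entry of $N^k$ lies in $q^k\integers\pwr{q}$.

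First I would establish the identity
\[
Z_G(q)^{-1} = \sum_{k = 0}^\infty (-1)^k N^k
\]
in the ring of matrices over $\integers\pwr{q}$. The partial sums telescope: $(I + N)\bigl(\sum_{k = 0}^m (-1)^k N^k\bigr) = I + (-1)^m N^{m + 1}$, and the right-hand side converges to $I$ entry by entry, since the coefficient of any fixed power of $q$ in an entry of $N^{m + 1}$ vanishes once $m + 1$ exceeds that power. So the series is a two-sided inverse of $Z_G = I + N$ over $\integers\pwr{q}$; this matches the fact, already noted in Section~\ref{sec:defn}, that $Z_G^{-1}$ has entries in $\integers\pwr{q}$.

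Next I would apply $\sm(-)$, which is a fixed finite linear combination of matrix entries and hence commutes with the (entrywise) limit of the partial sums, to get $\mg{G}(q) = \sum_{k = 0}^\infty (-1)^k \sm(N^k)$. Expanding the matrix product,
\[
\sm(N^k) = \sum_{x_0, \ldots, x_k} N(x_0, x_1) \cdots N(x_{k - 1}, x_k),
\]
and the factor $N(x_{i - 1}, x_i)$ equals $q^{d(x_{i - 1}, x_i)}$ when $x_{i - 1} \neq x_i$ and $0$ otherwise; so the surviving terms are exactly those indexed by tuples with $x_0 \neq x_1 \neq \cdots \neq x_k$, each contributing $q^{d(x_0, x_1) + \cdots + d(x_{k - 1}, x_k)}$. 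Substituting gives the first displayed formula (with the $k = 0$ term being $\sm(I) = v(G)$).

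Finally, to obtain the formula for $c_n$ I would collect the coefficient of $q^n$: a tuple $(x_0, \ldots, x_k)$ with $x_0 \neq \cdots \neq x_k$ contributes to $q^n$ precisely when $d(x_0, x_1) + \cdots + d(x_{k - 1}, x_k) = n$, and since each summand is at least $1$ this forces $k \leq n$, so the outer sum over $k$ may be truncated at $n$. I do not expect any genuine obstacle; the only point requiring care is the convergence-and-interchange argument in the second step, which is routine because $\integers\pwr{q}$ is $q$-adically complete.
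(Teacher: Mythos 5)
Your proof is correct, but it takes a genuinely different (though closely related) route from the paper's. The paper does not manipulate the matrix inverse directly: it defines a candidate weighting $\twid{w}_G(x) = \sum_{k} (-1)^k \sum_{x = x_0 \neq \cdots \neq x_k} q^{d(x_0,x_1) + \cdots + d(x_{k-1},x_k)}$, verifies by a one-line cancellation that it satisfies the weighting equations~\eqref{eq:wtg}, and then invokes Lemma~\ref{lemma:wtg-mag} to conclude that $\twid{w}_G = w_G$ and $\mg{G} = \sum_x \twid{w}_G(x)$. Your candidate inverse $\sum_k (-1)^k N^k$ has $\twid{w}_G$ as exactly its vector of row-sums, so the underlying algebra is the same; the difference is in packaging. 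The paper's route pushes all convergence issues into the single scalar statement that $\twid{w}_G$ satisfies the weighting equations, and reuses the lemma that serves as the workhorse throughout the paper (Lemmas~\ref{lemma:trans} and~\ref{lemma:pjn-wtgs}, Theorems~\ref{thm:pjn-union} and~\ref{thm:Whitney} all go through it). Your Neumann-series route requires the explicit telescoping-and-convergence argument in $\integers\pwr{q}$, which you handle correctly ($q$-adic completeness, $N^{m+1} \to 0$ entrywise, uniqueness of the inverse inside $\rationals\Lau{q}$), and in exchange it yields slightly more: a formula for every entry of $Z_G(q)^{-1}$, not just the row-sums, and it makes transparent why the coefficients are integers. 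Both arguments are complete; yours is the more standard linear-algebra derivation, the paper's is the more economical one given its existing machinery.
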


\begin{proof}
The two statements are trivially equivalent; we prove the first.  For $x
\in G$, define $\twid{w}_G(x) \in \integers\pwr{q}$ by
\[
\twid{w}_G(x)
=
\sum_{k = 0}^\infty (-1)^k 
\sum_{x = x_0 \neq x_1 \neq \cdots \neq x_k} 
q^{d(x_0, x_1) + \cdots + d(x_{k - 1}, x_k)}.
\]
We show that $\twid{w}_G$ satisfies the weighting equations.  The result
then follows from Lemma~\ref{lemma:wtg-mag}.

To verify the weighting equations, let $x \in G$.  Then
\begin{align*}
\sum_{y \in G} q^{d(x, y)} \twid{w}_G(y)        &
=
\twid{w}_G(x) 
+ 
\sum_{y\colon y \neq x} q^{d(x, y)} \twid{w}_G(y) \\
&
=
\twid{w}_G(x) 
+ \sum_{k = 0}^\infty (-1)^k \sum_{x \neq y_0 \neq \cdots \neq y_k}
q^{d(x, y_0) + d(y_0, y_1) + \cdots + d(y_{k - 1}, y_k)},
\end{align*}
which cancels to give $1$, as required.  
\end{proof}

Proposition~\ref{propn:series} bears a formal resemblance to Philip Hall's
formula for M\"obius inversion in posets (\cite[Proposition~6]{Rota} and
\cite[Proposition~3.8.5]{StanEC1}), as well as the classical alternating
sum formula for the Euler characteristic of a topological space.  The three
formulas are connected by the notion of the Euler characteristic of a
category \cite[Corollary~1.5 and Proposition~2.11]{ECC}, of which magnitude
is the graph-theoretic analogue.

\begin{cor}     
\label{cor:v-e}
Let $G$ be a graph.  Then $v(G) = \mg{G}(0)$ and $e(G) = -\frac{1}{2}
\frac{d}{dq}\mg{G}(q)\Big|_{q = 0}$.
\end{cor}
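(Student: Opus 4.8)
The plan is to read off both identities from the power-series formula in Proposition~\ref{propn:series}, writing $\mg{G}(q) = \sum_{n \geq 0} c_n q^n$ and computing $c_0$ and $c_1$ directly from the combinatorial description of the coefficients given there. Recall that $c_n = \sum_{k=0}^n (-1)^k N_{k,n}$, where $N_{k,n}$ counts tuples $(x_0, \ldots, x_k)$ of vertices with $x_0 \neq x_1 \neq \cdots \neq x_k$ and $d(x_0,x_1) + \cdots + d(x_{k-1},x_k) = n$. Since evaluating a power series at $0$ gives its constant term and differentiating at $0$ gives the linear coefficient, it suffices to show $c_0 = v(G)$ and $c_1 = -2 e(G)$.

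First I would compute $c_0$. Only $k = 0$ can contribute to $c_0$: for $k \geq 1$, consecutive vertices in the chain are distinct, so at least one distance $d(x_{i-1}, x_i)$ is positive (indeed $\geq 1$), forcing the total to be at least $1$. The $k = 0$ term counts single vertices $(x_0)$ with empty sum equal to $0$, of which there are $v(G)$. Hence $c_0 = v(G)$, giving $\mg{G}(0) = v(G)$.

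Next I would compute $c_1$. A chain of length $k$ contributing to $c_1$ must have $d(x_0,x_1) + \cdots + d(x_{k-1},x_k) = 1$ with all consecutive vertices distinct, so each of the $k$ summands is $\geq 1$ and they sum to $1$; this forces $k = 1$ and $d(x_0, x_1) = 1$. The $k = 0$ term cannot contribute to $c_1$ (its sum is $0$). So $c_1 = (-1)^1 \cdot \#\{(x_0, x_1) : x_0 \neq x_1,\ d(x_0,x_1) = 1\}$. Ordered pairs of vertices at distance $1$ correspond exactly to the two orientations of each edge, so there are $2 e(G)$ of them, whence $c_1 = -2 e(G)$. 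Then $\frac{d}{dq}\mg{G}(q)\big|_{q=0} = c_1 = -2 e(G)$, which rearranges to $e(G) = -\tfrac{1}{2} \frac{d}{dq}\mg{G}(q)\big|_{q=0}$.

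There is essentially no obstacle here: the only point requiring a moment's care is the observation that for $k \geq 1$ every term $d(x_{i-1},x_i)$ in the chain is strictly positive (because consecutive vertices are distinct and $d$ is a genuine metric on $V(G)$, possibly $\infty$-valued but never $0$ off the diagonal), so chains of length $k$ contribute only to coefficients $c_n$ with $n \geq k$. Granting that, the extraction of $c_0$ and $c_1$ is immediate, and the identification of distance-$1$ ordered pairs with oriented edges is exactly the definition of a (simple, loopless) graph.
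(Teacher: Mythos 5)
Your proof is correct and follows exactly the paper's route: the paper's own proof is the one-line observation that, in the notation of Proposition~\ref{propn:series}, $c_0 = v(G)$ and $c_1 = -2e(G)$, and you have simply supplied the (easy) verification of those two identities from the combinatorial description of the coefficients. Nothing is missing.
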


\begin{proof}
In the notation of Proposition~\ref{propn:series}, $c_0 = v(G)$ and $c_1 =
-2e(G)$. 
\end{proof}

In particular, magnitude determines both order and size.  Unlike the Tutte
polynomial, it even determines order for disconnected graphs.

\begin{remark}
It follows from Proposition~\ref{propn:series} that $c_0 \geq 0$, $c_1 \leq
0$, and
\[
c_2 
= 
\card{ \{ (x, y, z) \such d(x, y) = d(y, z) = 1 \} }
-
\card{ \{ (x, z) \such d(x, z) = 2 \} }
\geq 
0,
\]
suggesting that the coefficients $c_n$ alternate in sign indefinitely.
However, the Petersen graph (Example~\ref{egs:trans}(\ref{eg:Petersen}))
shows that this is not true in general.
\end{remark}

\begin{example} 
\label{eg:Simon} 
In all the examples so far, $\mg{G}(1)$ is the number $k(G)$ of
connected-components of $G$.  Indeed, this is easily proved in the case
where none of the weights of $G$ has a pole at $1$: for the weighting
equations~\eqref{eq:wtg} then imply that at $q = 1$, the weights in each
component sum to $1$.  But $\mg{G}(1) \neq k(G)$ in general, by the
following example of Willerton \cite[Example~2.2.8]{MMS}.  Let $W$ be the
complete graph $K_6$ with a triangle of edges (but no vertices) removed.
By direct calculation,
\[
\mg{W} = \frac{6}{1 + 4q},
\]
giving $\mg{W}(1) = 6/5 \neq 1 = k(W)$.

In fact, there is \emph{no} way to derive the number of
connected-components from the magnitude.  For, writing $mG$ for the
disjoint union of $m$ copies of a graph $G$, we have
\[
\mg{5W}
=
\frac{30}{1 + 4q}
=
\mg{6K_5}
\]
(by Example~\ref{egs:trans}(\ref{eg:complete}) and
Lemma~\ref{lemma:coproduct}), but $k(5W) = 5 \neq 6 = k(6K_5)$.  
\end{example}

\section{The magnitude of a union}
\label{sec:union}

We now develop the analogy between magnitude of graphs and cardinality of
sets.  We have already seen several aspects of this: the magnitude of a
disjoint union is the sum of the magnitudes (Lemma~\ref{lemma:coproduct}),
the magnitude of a cartesian product is the product of the magnitudes
(Lemma~\ref{lemma:cart}), and the magnitude of a graph with no edges is
simply the cardinality of the vertex-set (Example~\ref{eg:edgeless}).  It
is natural, therefore, to ask whether magnitude obeys the
inclusion-exclusion principle.

In fact, it does not, for reasons that have nothing to do with magnitude.
As we shall see, no nontrivial graph invariant behaves wholly like
cardinality.  However, magnitude does satisfy inclusion-exclusion under
reasonably generous hypotheses on the subgraphs concerned.  This is our
first main result, Theorem~\ref{thm:pjn-union}.

Let us first make precise the claim about cardinality-like invariants.  For
a ring $R$, an $R$-valued \demph{graph invariant} is a function $\Phi$
assigning an element $\Phi(G) \in R$ to each graph $G$, in such a way that
$\Phi(G) = \Phi(H)$ whenever $G \iso H$.  It is \demph{multiplicative} if
$\Phi(K_1) = 1$ and $\Phi(G \cp H) = \Phi(G)\cdot \Phi(H)$ for all $G$ and
$H$.  (Here $K_1$ is the one-vertex graph, the unit for $\cp$.)  It
satisfies \demph{inclusion-exclusion} if $\Phi(\emptyset) = 0$ and
\[
\Phi(X) = \Phi(G) + \Phi(H) - \Phi(G \cap H)
\]
whenever $X$ is a graph with subgraphs $G$ and $H$ such that $G \cup H =
X$.

For example, take any ring $R$, and let $\Phi(G) = v(G)$ be the order of
$G$, interpreted as the element $v(G)\cdot 1 = 1 + \cdots + 1$ of $R$.
Then $\Phi$ is a multiplicative $R$-valued graph invariant satisfying
inclusion-exclusion.  The next lemma tells us that under mild assumptions
on $R$, it is the only one.

\begin{lemma}   
\label{lemma:cant-have-everything}
Let $R$ be a ring containing no nonzero nilpotents.  Then the only
multiplicative $R$-valued graph invariant satisfying inclusion-exclusion is
order.
\end{lemma}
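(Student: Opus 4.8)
The plan is to exploit inclusion-exclusion to compute $\Phi$ on a large enough supply of graphs, and then play multiplicativity against it to force a relation that, in a ring with no nilpotents, pins $\Phi$ down completely. First I would observe that $\Phi(\emptyset)=0$ and that applying inclusion-exclusion to $X=K_1$ with $G=H=K_1$ (so $G\cap H=K_1$) gives nothing new, but applying it to the edgeless graph $\edgeless{2}$ written as the union of two copies of $K_1$ meeting in $\emptyset$ yields $\Phi(\edgeless{2})=\Phi(K_1)+\Phi(K_1)-\Phi(\emptyset)=2$. Inductively, $\Phi(\edgeless{n})=n$ for all $n\ge 0$, which is exactly order on edgeless graphs. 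So the content is to show $\Phi(G)=v(G)$ for \emph{every} graph, i.e.\ that the edges contribute nothing.

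The key step is to analyze a single edge. Let $K_2$ be the one-edge graph. I would find a clean identity relating $\Phi(K_2)$ to known quantities. The natural move is to use the cartesian product: $K_2\cp K_2$ is the $4$-cycle $C_4$, and more generally one can write cartesian products of $K_2$'s (hypercubes) as unions of smaller pieces. A cleaner route: decompose a well-chosen graph two different ways. For instance, take the path $P_3$ on three vertices $a-b-c$; it is the union of two copies of $K_2$ (namely $ab$ and $bc$) meeting in the single vertex $b$, so $\Phi(P_3)=2\Phi(K_2)-1$. Likewise a path $P_{n+1}$ built from $n$ edges glued successively at vertices gives $\Phi(P_{n+1})=n\Phi(K_2)-(n-1)$, i.e.\ $\Phi(P_{n+1})=1+n(\Phi(K_2)-1)$. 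Now set $t=\Phi(K_2)-2\in R$, the "excess" of an edge over its two vertices; the path computation says $\Phi(P_{n+1})=(n+1)+nt$. The main obstacle will be to produce a second, independent evaluation of some $\Phi$ on a graph expressible in terms of $t$ that forces $t^2$ (or a product of such excesses) to vanish, so that $t=0$ follows from the no-nilpotents hypothesis.

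To get that second evaluation I would use multiplicativity on $K_2\cp K_2=C_4$: on one hand $\Phi(C_4)=\Phi(K_2)^2=(2+t)^2=4+4t+t^2$; on the other hand I can compute $\Phi(C_4)$ by inclusion-exclusion, writing $C_4$ as the union of two paths $P_3$ overlapping in a $P_2$-shaped piece — concretely, $C_4$ with vertices $1,2,3,4$ in a cycle is the union of the path $1-2-3$ and the path $3-4-1$, whose intersection is the edgeless graph on $\{1,3\}$, giving $\Phi(C_4)=2\Phi(P_3)-\Phi(\edgeless{2})=2(3+2t)-2=4+4t$. Comparing the two expressions yields $t^2=0$, so $t=0$ since $R$ has no nonzero nilpotents; hence $\Phi(K_2)=2$ and every edge is "free." Finally, an induction on the number of edges, removing one edge $e=\{u,v\}$ at a time — writing $G$ as the union of $G\without e$ (on all of $V(G)$) with the single edge $\{u,v\}$, intersecting in the edgeless graph on $\{u,v\}$, so $\Phi(G)=\Phi(G\without e)+\Phi(K_2)-\Phi(\edgeless{2})=\Phi(G\without e)$ — reduces to the edgeless case already handled, giving $\Phi(G)=v(G)$ for all $G$. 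I would double-check the subgraph conventions (spanning subgraphs, the intersection and union both taken with the expected vertex sets) so that each application of inclusion-exclusion is legitimate, since that is the only place the argument could slip.
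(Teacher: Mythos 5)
Your proposal is correct and follows essentially the same route as the paper: establish $\Phi(\edgeless{n}) = n$, show each edge contributes a fixed excess $t = \Phi(K_2) - 2$ via inclusion-exclusion, and then compare the multiplicative computation $\Phi(C_4) = \Phi(K_2)^2$ with an inclusion-exclusion computation of $\Phi(C_4)$ to force $t^2 = 0$, hence $t = 0$. The only cosmetic difference is that you evaluate $\Phi(C_4)$ by gluing two paths along two vertices, whereas the paper first derives the general formula $\Phi(X) = v(X) + t\cdot e(X)$ by repeated edge removal and then specializes it to $C_4$.
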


\begin{proof}
Let $\Phi$ be a multiplicative $R$-valued graph invariant satisfying
inclusion-exclusion.  Then $\Phi(G \du H) = \Phi(G) + \Phi(H)$ for all $G$
and $H$.  Writing $\edgeless{n}$ for the edgeless graph on $n$ vertices, we
have $\Phi(\edgeless{0}) = \Phi(\emptyset) = 0$ and $\Phi(\edgeless{1}) =
\Phi(K_1) = 1$, so by induction, $\Phi(\edgeless{n}) = n$ for all $n \geq
0$.

Let $X$ be a graph.  Choose an edge $e$ of $X$, write $X'$ for the subgraph
of $X$ containing all the vertices and all the edges except $e$, and write
$H$ for the subgraph of $X$ consisting of just $e$ and its two endpoints.
Then by inclusion-exclusion,
\begin{align*}
\Phi(X) &
= 
\Phi(X') + \Phi(H) - \Phi(X' \cap H)
\\
&
=
\Phi(X') + \Phi(K_2) - \Phi(\edgeless{2}).
\end{align*}
So, writing $\epsln = \Phi(K_2) - 2$, we have $\Phi(X) = \Phi(X') +
\epsln$.  Applying this argument repeatedly gives $\Phi(X) =
\Phi(\ovln{K_{v(X)}}) + \epsln\cdot e(X)$, that is, $\Phi(X) = v(X) +
\epsln\cdot e(X)$.

It remains to show that $\epsln = 0$, which we do by computing $\Phi(C_4)$
in two ways.  On the one hand, $\Phi(C_4) = 4 + 4\epsln$ by the previous
paragraph.  On the other, $C_4 = K_2 \cp K_2$ and $\Phi$ is multiplicative,
so $\Phi(C_4) = (2 + \epsln)^2$.  Comparing the two expressions gives
$\epsln^2 = 0$.  But $R$ has no nonzero nilpotents, so $\epsln = 0$, as
required.

(For an arbitrary ring $R$, the graph invariants satisfying
multiplicativity and inclusion-exclusion are exactly those of the form $v +
\epsln e$ where $\epsln \in R$ with $\epsln^2 = 0$.)
\end{proof}

We already know that magnitude is a multiplicative graph invariant
(Lemma~\ref{lemma:cart}) and that it is not simply the order.  It cannot,
therefore, satisfy inclusion-exclusion.

Nevertheless, we can seek conditions under which the inclusion-exclusion
principle does hold.  Consider a graph $X$ expressed as the union of
subgraphs $G$ and $H$.  Since magnitude is defined in terms of the metric,
it is natural to ask that distances between vertices of $G$ are the same no
matter whether we measure them in $G$ or in $X$, and similarly for $H$ and
$G \cap H$.  We therefore make the following definition.

\begin{defn}    
\label{defn:convex}
A subgraph $U$ of a graph $X$ is \demph{convex} in $X$ if $d_U(u, u') =
d_X(u, u')$ for all $u, u' \in U$. 
\end{defn}

The terminology comes from a useful analogy between graphs and convex sets.
A subgraph of a graph is convex if its shortest-path metric is the same as
its subspace metric.  Analogously, a compact subset of $\reals^n$ is convex
if its shortest-path metric is the same as the subspace metric.  Of course,
the two uses of `path' are different: in the discrete case, a path of
length $D$ is a distance-preserving map out of $\{0, 1, \ldots, D\}$, while
in the continuous case, it is a distance-preserving map out of $[0, D]$.

When a convex set $X \sub \reals^n$ is covered by closed subsets $G$ and
$H$, it is a fact that if $G \cap H$ is convex then so are $G$ and $H$.
Here is the graph-theoretic analogue.

\begin{lemma}   
\label{lemma:convex-subgraphs}
Let $X$ be a graph, and let $G$ and $H$ be subgraphs with $G \cup H =
X$.  If $G \cap H$ is convex in $X$ then $G$ and $H$ are also convex in $X$.
\end{lemma}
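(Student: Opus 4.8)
The plan is to argue by contradiction: suppose $G$ is not convex in $X$, so there exist $u, u' \in G$ with $d_X(u,u') < d_G(u,u')$. Fix such a pair minimizing $d_X(u,u')$, and let
\[
u = z_0, z_1, \ldots, z_D = u'
\]
be a shortest path in $X$, where $D = d_X(u,u')$. First I would dispose of the easy cases: if every $z_i$ lies in $G$, then the path is a path in $G$, giving $d_G(u,u') \le D$, a contradiction; so some $z_i \notin G$, hence $z_i \in H$ (as $G \cup H = X$). Similarly some intermediate vertex lies in $G$ — in fact $z_0 = u \in G$.

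The key step is to locate an excursion of the path into $H \setminus G$ and replace it using convexity of $G \cap H$. Walking along the path from $z_0$, let $z_a$ be the last vertex before the first vertex not in $G$, and let $z_b$ be the first vertex in $G$ after that; so $0 \le a < b \le D$, both $z_a, z_b \in G$, and $z_{a+1}, \ldots, z_{b-1} \notin G$, hence all lie in $H$. Now $z_a$ is adjacent to $z_{a+1} \in H$ and $z_b$ is adjacent to $z_{b-1} \in H$, so $z_a$ and $z_b$ both have neighbours in $H$; since $z_a, z_b \in G$ too, I want to conclude $z_a, z_b \in G \cap H$. This holds because a vertex of $X$ lying in $G$ and adjacent to a vertex in $H$ must itself be a vertex of $H$ — here I use the convention that a subgraph $H$ of $X$ with $G \cup H = X$ contains the edge $\{z_a, z_{a+1}\}$ (it is an edge of $X$ not lying in $G$, as $z_{a+1} \notin G$), and hence contains its endpoint $z_a$; likewise for $z_b$.

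With $z_a, z_b \in G \cap H$, convexity of $G \cap H$ in $X$ gives a path in $G \cap H$ from $z_a$ to $z_b$ of length $d_X(z_a, z_b) = b - a$ (the subpath $z_a, \ldots, z_b$ of a shortest $X$-path is itself geodesic). This path lies in $G$; splicing it in place of $z_a, \ldots, z_b$ produces a walk in $X$ from $u$ to $u'$ of length $D$ with strictly fewer vertices outside $G$. The subtlety — and the main obstacle — is that this spliced object is a walk, not necessarily a shortest path, and repeating the surgery requires care; the clean way is to set it up as an induction. Concretely, I would induct on the number of vertices of the chosen $X$-geodesic that lie outside $G$: the surgery above strictly decreases that number while keeping the endpoints in $G$ and keeping a geodesic of $X$ (after extracting a shortest path from the resulting walk between $u$ and $u'$, whose length is still $D$ and which — by the minimality of $d_X(u,u')$ among non-convex pairs, applied to consecutive vertices — can be taken to have all its intermediate vertices in $G$ once short enough), eventually yielding a geodesic entirely in $G$, so $d_G(u,u') \le D$, the desired contradiction. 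The symmetric argument handles $H$. Alternatively, and perhaps more cleanly, one runs the same excursion argument directly on a shortest $X$-path from $u$ to $u'$, replacing \emph{every} maximal excursion outside $G$ simultaneously: each excursion $z_a, \ldots, z_b$ has endpoints in $G \cap H$ by the argument above, each gets replaced by a $G \cap H$-path of the same length $b-a$, and the concatenation is a walk of length exactly $D$ lying entirely in $G$, whence $d_G(u, u') \le D$. This avoids the induction entirely, and I would present this version.
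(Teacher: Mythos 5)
Your argument is correct and is essentially the paper's: you locate an excursion of an $X$-geodesic outside $G$, show its endpoints lie in $G \cap H$ (re-proving inline what the paper isolates as Lemma~\ref{lemma:through-intersection}), and reroute that excursion through $G \cap H$ using its convexity. The only difference is bookkeeping: the paper picks a geodesic maximizing the number of vertices of $G$ and derives a contradiction from a single splice, whereas your final version splices out all maximal excursions at once to exhibit a walk in $G$ of length $d_X(u,u')$ directly, which neatly sidesteps the path-versus-walk worry (and the induction) occupying the middle of your write-up.
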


\begin{proof}
We prove it for $G$.  Let $g, g' \in G$.  If $d_X(g, g') = \infty$ then
certainly $d_G(g, g') = d_X(g, g')$.  Otherwise, write $n = d_X(g, g') <
\infty$.  We may choose a shortest path $g = x_0, x_1, \ldots, x_n = g'$
from $g$ to $g'$ in $X$ containing the greatest possible number of vertices
of $G$.  Suppose for a contradiction that $x_j \not\in G$ for some $j$.

By Lemma~\ref{lemma:through-intersection} below, we may choose $i$ and $k$
with $0 \leq i < j < k \leq n$ and $x_i, x_k \in G \cap H$.  Then $x_i,
x_{i + 1}, \ldots, x_k$ is a shortest path from $x_i$ to $x_k$ in $X$, so
$d_X(x_i, x_k) = k - i$.  But $G \cap H$ is convex in $X$, so there is a
path $u_i, u_{i + 1}, \ldots u_k$ from $x_i$ to $x_k$ in $G \cap H$.  Hence
\[
g = x_0, x_1, \ldots, x_{i - 1}, 
x_i = u_i, u_{i + 1}, \ldots, u_{k - 1}, u_k = x_k, 
x_{k + 1}, \ldots, x_{n - 1}, x_n = g'
\]
is a shortest path from $g$ to $g'$ in $X$ containing more vertices of
$G$ than the original path.  This is the required contradiction.
\end{proof}

This proof used the following lemma, which is a combinatorial counterpart
of the fact that when $G$ and $H$ are closed subsets of $\reals^n$, any
path from a point of $G$ to a path of $H$ passes through some point of $G
\cap H$.

\begin{lemma}   
\label{lemma:through-intersection}
Let $X$ be a graph, with subgraphs $G$ and $H$ such that $G \cup H =
X$.  Then every path from a vertex in $G$ to a vertex in $H$ contains at
least one vertex in $G \cap H$.
\end{lemma}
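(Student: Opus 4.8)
The plan is to argue by induction on the length of the path. Let $p = (x_0, x_1, \ldots, x_n)$ be a path in $X$ with $x_0 \in G$ and $x_n \in H$; we show some $x_i$ lies in $G \cap H$. Since every edge of $X = G \cup H$ belongs to $E(G)$ or $E(H)$, each of the $n$ edges $\{x_{i-1}, x_i\}$ is an edge of $G$ or an edge of $H$. The key observation is that an edge of $G$ has both endpoints in $V(G)$, and similarly for $H$.

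First I would dispose of the base case $n = 0$: then $x_0 = x_n$ lies in $G$ and in $H$, so $x_0 \in G \cap H$. For the inductive step, suppose $n \geq 1$ and the claim holds for shorter paths. Consider the last edge $\{x_{n-1}, x_n\}$. If it is an edge of $H$, then $x_{n-1} \in V(H)$, and $(x_0, \ldots, x_{n-1})$ is a path in $X$ from a vertex of $G$ to a vertex of $H$, so by induction it meets $G \cap H$, and hence so does $p$. If instead $\{x_{n-1}, x_n\}$ is an edge of $G$, then $x_n \in V(G)$; but $x_n \in V(H)$ by hypothesis, so $x_n \in G \cap H$ already. Either way $p$ contains a vertex of $G \cap H$, completing the induction.

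There is really no obstacle here; the only thing to be careful about is the convention that the vertex set of a subgraph contains the endpoints of all its edges, which is immediate from the definition of subgraph. One could equally phrase the argument non-inductively: let $i$ be the largest index with $x_i \in V(G)$ (which exists since $x_0 \in V(G)$); if $i = n$ then $x_n \in V(G) \cap V(H) = V(G \cap H)$, and otherwise the edge $\{x_i, x_{i+1}\}$ cannot be an edge of $G$ (else $x_{i+1} \in V(G)$, contradicting maximality), so it is an edge of $H$, whence $x_i \in V(H)$ and thus $x_i \in G \cap H$. I would present whichever of these two forms reads more cleanly in context; the inductive version matches the style of the surrounding lemmas, while the maximal-index version is shorter.
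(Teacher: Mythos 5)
Your proposal is correct, and its second (maximal-index) formulation is essentially word-for-word the paper's own proof: take the largest $i$ with $x_i \in G$, and if $i < n$ observe that $\{x_i, x_{i+1}\}$ must then be an edge of $H$, forcing $x_i \in G \cap H$. The inductive version is just a repackaging of the same idea, so there is nothing substantively different to compare.
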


\begin{proof}
Let $x_0, x_1, \ldots, x_n$ be a path with $x_0 \in G$ and $x_n \in H$.
Take the largest $i \in \{0, 1, \ldots, n\}$ such that $x_i \in G$.  We
prove that $x_i \in G \cap H$.  If $i = n$, this is immediate.  If not,
then $x_{i + 1} \not\in G$, so $\{x_i, x_{i + 1}\} \not\in E(G)$.  But
$X = G \cup H$, so $\{x_i, x_{i + 1}\} \in E(H)$, so $x_i \in H$, as
required.
\end{proof}

A wrinkle in the analogy between convex sets and graphs is that in a convex
set, there is only one shortest path between each pair of points, but in a
graph, there may be many.  It is arguably more accurate to say that convex
sets are analogous to trees, since shortest paths in a tree are unique.  We
will see that for trees and subtrees, the inclusion-exclusion formula holds
without restriction (Corollary~\ref{cor:glue-trees}, due to Meckes).  The
following example of Willerton~\cite{WillTPMF} shows that for convex
subgraphs of an arbitrary graph, inclusion-exclusion can fail.

\begin{example}[Willerton]      
\label{eg:two-3-cycles}
Let $X$ be the graph formed by gluing two 3-cycles together along an edge.
Then
\[
\mg{X} 
=
\frac{4 - 2q}{1 + 2q - q^2}
\neq
\frac{4 + 2q}{1 + 3q + 2q^2}
=
2\cdot \mg{C_3} - \mg{C_2},
\]
by direct calculation and Example~\ref{egs:trans}(\ref{eg:cycle})
respectively.  So, magnitude does not satisfy the inclusion-exclusion
principle even when all the subgraphs concerned are convex.  
\end{example}

Convexity will be one hypothesis in our inclusion-exclusion theorem.  We
now formulate the other. 

\begin{defn}
Let $U$ be a convex subgraph of a graph $X$.  Write
\[
\inftyball{X}{U}        
=
\bigcup_{u \in U} \{ x \in X \such d(u, x) < \infty\}   
=
\{ x \in X \such x \text{ is connected to some vertex of } U \}.
\]
We say that $X$ \demph{projects} to $U$ (Figure~\ref{fig:projection}) if for
\begin{figure}
\centering
\setlength{\unitlength}{1em}
\begin{picture}(12,9)(-1.5,.4)
\put(0,0){\includegraphics[height=10\unitlength]{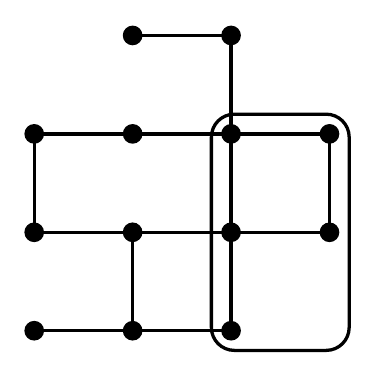}}
\put(0.6,2.9){$x$}
\put(6.3,2.9){$\pi(x)$}
\put(9.7,3.5){$U$}
\put(-1.5,4.7){$X$}
\end{picture}%
% }
\caption{A graph $X$, a subgraph $U$ to which it projects, and 
  the effect of the projection map $\pi$ on a vertex $x$.}
\label{fig:projection}
\end{figure}
all $x \in \inftyball{X}{U}$, there exists a vertex $\pi(x) \in U$ such
that for all $u \in U$,
\[
d(x, u) = d(x, \pi(x)) + d(\pi(x), u).
\]
\end{defn}

If $X$ projects to $U$ then $\pi(x)$ is uniquely determined by $x$, being
the unique vertex of $U$ closest to $x$.  This defines a \demph{projection}
map $\pi\from \inftyball{X}{U} \to V(U)$.

\begin{example} 
\label{eg:edge-pjn} 
Let $e$ be an edge of a graph $X$.  If the component of $X$ containing $e$
is bipartite, then $X$ projects to the subgraph consisting of $e$ and its
endpoints alone. 
\end{example}

\begin{lemma}   
\label{lemma:pjn-wtgs}
Let $X$ be a graph, and let $U$ be a convex subgraph to which $X$
projects.  Then
\begin{equation}        
\label{eq:fibre-sum}
w_U(u) = \sum_{x \in \pi^{-1}(u)} q^{d(u, x)} w_X(x)
\end{equation}
for each $u \in U$, where $\pi$ denotes the projection.
\end{lemma}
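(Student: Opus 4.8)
The plan is to verify that the right-hand side of \eqref{eq:fibre-sum}, viewed as a function of $u \in U$, satisfies the weighting equations for $U$; by Lemma~\ref{lemma:wtg-mag} this forces it to equal $w_U(u)$. So define $\twid{w}_U\from V(U) \to \integers\pwr{q}$ by $\twid{w}_U(u) = \sum_{x \in \pi^{-1}(u)} q^{d(u, x)} w_X(x)$. We must show that for each fixed $u' \in U$,
\[
\sum_{u \in U} q^{d(u', u)} \twid{w}_U(u) = 1.
\]
Substituting the definition of $\twid{w}_U$ and using $d_U = d_X$ on $U$ (convexity of $U$), the left-hand side becomes a double sum over $u \in U$ and $x \in \pi^{-1}(u)$ of $q^{d(u', u) + d(u, x)} w_X(x)$. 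Since $x \in \pi^{-1}(u)$ means $\pi(x) = u$ and $x$ is connected to $U$, the projection property gives $d(u', x) = d(u', \pi(x)) + d(\pi(x), x) = d(u', u) + d(u, x)$, so each summand is exactly $q^{d(u', x)} w_X(x)$.

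Now the fibres $\pi^{-1}(u)$ for $u \in U$ partition $\inftyball{X}{U}$, so the double sum collapses to $\sum_{x \in \inftyball{X}{U}} q^{d(u', x)} w_X(x)$. Finally, for any $x \in X$ not connected to $U$ we have $d(u', x) = \infty$ and hence $q^{d(u', x)} = 0$, so we may freely extend the sum over all $x \in X$ without changing its value, obtaining $\sum_{x \in X} q^{d(u', x)} w_X(x)$. By the weighting equations \eqref{eq:wtg} for $X$, applied at the vertex $u' \in V(X)$, this equals $1$. Thus $\twid{w}_U$ satisfies the weighting equations for $U$, and Lemma~\ref{lemma:wtg-mag} (applied over $\integers\pwr{q}$, so that each $w_X(x)$ and hence each $\twid{w}_U(u)$ is a genuine power series) yields $\twid{w}_U = w_U$, which is the claim.

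The only point requiring care — and the main obstacle, such as it is — is the bookkeeping around which vertices are connected to $U$: one must check that the reindexing from "sum over $u\in U$ and $x \in \pi^{-1}(u)$" to "sum over $x \in \inftyball{X}{U}$" to "sum over all $x \in X$" is legitimate, and that the projection identity $d(u',x) = d(u',u) + d(u,x)$ is being invoked only for $x$ in the domain of $\pi$ (i.e.\ $x \in \inftyball{X}{U}$) and for $u = \pi(x) \in U$, which is precisely where the definition of "$X$ projects to $U$" grants it. Everything else is a routine manipulation of the power series, valid since $\det(Z_X)$ has constant term $1$ so all the relevant quantities live in $\integers\pwr{q}$.
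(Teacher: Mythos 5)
Your proof is correct and follows essentially the same route as the paper's: define $\twid{w}_U$ as the right-hand side, use the projection identity to collapse the double sum to a sum over $\inftyball{X}{U}$, extend to all of $X$ via the convention $q^\infty = 0$, and invoke the weighting equations for $X$ together with Lemma~\ref{lemma:wtg-mag}. The extra care you take over the reindexing and the use of convexity to identify $d_U$ with $d_X$ is exactly the bookkeeping the paper leaves implicit.
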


\begin{proof}
Write $\twid{w}_U(u)$ for the right-hand side of~\eqref{eq:fibre-sum}.  We
verify that $\twid{w}_U$ satisfies the weighting equations.  It then
follows from Lemma~\ref{lemma:wtg-mag} that $w_U = \twid{w}_U$.  

Let $u \in U$.  Recalling the convention that $q^\infty = 0$, we have
\begin{align*}
\sum_{v \in U} q^{d(u, v)} \twid{w}_U(v)        &
=
\sum_{v \in U, \ y \in \pi^{-1}(v)}
q^{d(u, v) + d(v, y)} w_X(y)    \\
&
=
\sum_{y \in \inftyball{X}{U}} q^{d(u, \pi(y)) + d(\pi(y), y)} w_X(y)   \\
&
=
\sum_{y \in X} q^{d(u, y)} w_X(y)
=
1,
\end{align*}
as required.
\end{proof}

\begin{thm}     
\label{thm:pjn-union}
Let $X$ be a graph, with subgraphs $G$ and $H$ such that $G \cup H = X$.
Suppose that $G \cap H$ is convex in $X$ and that $H$ projects to
$G \cap H$.  Then 
\[
\mg{X} = \mg{G} + \mg{H} - \mgb{G \cap H}.
\]
\end{thm}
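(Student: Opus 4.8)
The plan is to prove the magnitude identity by exhibiting an explicit weighting on $X$ built from the weightings on $G$, $H$, and $G \cap H$, and then invoking Lemma~\ref{lemma:wtg-mag}. Write $K = G \cap H$. The natural candidate is to take, for a vertex $x \in V(X)$, the weight $w_G(x)$ if $x \in V(G) \setminus V(K)$, the weight $w_H(x)$ if $x \in V(H) \setminus V(K)$, and $w_G(x) + w_H(x) - w_K(x)$ if $x \in V(K)$. Summing these over all vertices of $X$ immediately gives $\mg{G} + \mg{H} - \mg{K}$ once we know this is the genuine weighting $w_X$; so the entire content is the verification that $\twid{w}_X$ so defined satisfies the weighting equations $\sum_{y \in X} q^{d_X(x, y)} \twid{w}_X(y) = 1$ for every $x \in X$.

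First I would record the structural facts that make the distances behave well. By Lemma~\ref{lemma:convex-subgraphs}, $G$ and $H$ are convex in $X$, and $K$ is convex in $X$ by hypothesis, so all the shortest-path metrics in sight agree with $d_X$; I will just write $d$. By Lemma~\ref{lemma:through-intersection}, any path in $X$ from a vertex of $G$ to a vertex of $H$ meets $K$, which means that for $x \in V(G)$ and $y \in V(H)$ with $d(x,y) < \infty$, any geodesic passes through $K$; combined with convexity of $K$ this gives $d(x, y) = d(x, k) + d(k, y)$ for a suitable $k \in K$ — in fact, since $H$ projects to $K$, for $y \in V(H)$ the vertex $\pi(y) \in K$ works uniformly: $d(x, y) = d(x, \pi(y)) + d(\pi(y), y)$ whenever $x \in V(K)$ (and more generally whenever the relevant geodesic is forced through $K$). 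These observations are what let me split the sum $\sum_{y \in X}$ into pieces indexed by $G \setminus K$, $H \setminus K$, and $K$, and recombine geodesics.

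The verification then splits into cases on where $x$ lies. If $x \in V(G)$ (in particular if $x \in V(G) \setminus V(K)$, but also if $x \in V(K)$), I would split $\sum_{y \in X} q^{d(x,y)} \twid{w}_X(y)$ as $\sum_{y \in G} q^{d(x,y)} w_G(y) + \sum_{y \in H \setminus K} q^{d(x,y)} w_H(y) - \sum_{y \in K} q^{d(x,y)} w_K(y)$. The first term is $1$ by the weighting equations for $G$. For the remaining two, I use that $H$ projects to $K$: the sum over $y \in H$ can be reorganised by fibres of $\pi$, and by the projection identity $q^{d(x,y)} = q^{d(x, \pi(y))} \cdot q^{d(\pi(y), y)}$ (valid here because a geodesic from $x \in V(G)$ to $y \in V(H)$ must pass through $K$, hence through $\pi(y)$). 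Applying Lemma~\ref{lemma:pjn-wtgs} — equation~\eqref{eq:fibre-sum} — the fibre-sums $\sum_{y \in \pi^{-1}(k)} q^{d(k,y)} w_H(y)$ collapse to $w_K(k)$, so that $\sum_{y \in H} q^{d(x,y)} w_H(y) = \sum_{k \in K} q^{d(x,k)} w_K(k)$. Therefore $\sum_{y \in H \setminus K} q^{d(x,y)} w_H(y) = \sum_{k \in K} q^{d(x,k)} w_K(k) - \sum_{k \in K} q^{d(x,k)} w_H(k)$, and when we add the $-\sum_{y \in K} q^{d(x,y)} w_K(y)$ term and the $-\sum_{y \in K} q^{d(x,y)} w_H(y)$ hidden inside the reorganisation, everything involving $w_K$ and the $K$-part of $w_H$ cancels against the $K$-contributions already counted in $\sum_{y \in G} q^{d(x,y)} w_G(y)$; I expect the bookkeeping to close out to exactly $1$. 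The symmetric case $x \in V(H) \setminus V(K)$ is handled the same way with the roles of the two sums swapped, except that now I cannot assume $H$ projects onto $K$ "from the $x$ side" trivially, so the key point there is that a geodesic from $x \in V(H)$ to $y \in V(G) \setminus V(K)$ still must cross $K$ (Lemma~\ref{lemma:through-intersection}) and hence passes through $\pi(x)$ followed by a point of $K$ — here I use the projection property of $H$ applied at $x$ itself.

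The main obstacle I anticipate is precisely this last case and the careful handling of disconnected or infinite-distance situations: I must make sure that when $d(x,y) = \infty$ (so $q^{d(x,y)} = 0$) nothing goes wrong, and that the reorganisation by fibres of $\pi$ only ever ranges over $\inftyball{X}{K}$, with vertices of $X$ not connected to $K$ contributing zero to every relevant sum. The potential subtlety is whether the projection identity $d(x,y) = d(x,\pi(y)) + d(\pi(y),y)$ really holds for the pairs $(x,y)$ I need — it is guaranteed by the definition of "projects" only when $x \in \inftyball{X}{K}$ and one endpoint lies in $K$, so I must first use Lemma~\ref{lemma:through-intersection} to reduce to that situation. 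Once the case analysis is set up so that every geodesic crossing from one side to the other is routed through a point of $K$ and (when an endpoint is in $H$) through the projection, the algebra is a routine cancellation, and the conclusion $\mg{X} = \sum_x \twid{w}_X(x) = \mg{G} + \mg{H} - \mgb{G \cap H}$ follows from Lemma~\ref{lemma:wtg-mag}.
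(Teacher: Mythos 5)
Your proposal follows essentially the same route as the paper: take the candidate weighting $w_G + w_H - w_{G\cap H}$ (extended by zero), verify the weighting equations case by case using Lemma~\ref{lemma:convex-subgraphs}, Lemma~\ref{lemma:through-intersection}, Lemma~\ref{lemma:pjn-wtgs} and the key geodesic identity $d(g,h) = d(g,\pi(h)) + d(\pi(h),h)$, and conclude by Lemma~\ref{lemma:wtg-mag}. The only blemish is bookkeeping in the $x \in G$ case (your displayed split drops the $+\sum_{u \in K} q^{d(x,u)} w_H(u)$ term and later reintroduces it with the wrong sign), but since you correctly establish $\sum_{y \in H} q^{d(x,y)} w_H(y) = \sum_{k \in K} q^{d(x,k)} w_K(k)$, the cancellation is immediate and the argument is sound.
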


\begin{proof}
We will prove that $w_X = w_G + w_H - w_{G \cap H}$, where on the
right-hand side, the function $w_G$ on $V(G)$ is extended by zero to all of
$V(X)$, and similarly $w_H$ and $w_{G \cap H}$.  The theorem then follows
immediately.

We may unambiguously write $d$ for distance, by
Lemma~\ref{lemma:convex-subgraphs}.  Also, we write $\pi\from
\inftyball{H}{G \cap H} \to V(G \cap H)$ for the projection associated
with $G \cap H \sub H$.

First I claim that for all $g \in G$ and $h \in \inftyball{H}{G \cap
  H}$,
\begin{equation}        
\label{eq:pjn-path}
d(g, h) = d(g, \pi(h)) + d(\pi(h), h).
\end{equation}
If $d(g, h) = \infty$, this is immediate from the triangle inequality.
Otherwise, by Lemma~\ref{lemma:through-intersection}, $d(g, h) = d(g, u) +
d(u, h)$ for some $u \in G \cap H$.  But also
\begin{align*}
d(g, u) + d(u, h)       &
=
d(g, u) + d(u, \pi(h)) + d(\pi(h), h)   \\
&
\geq
d(g, \pi(h)) + d(\pi(h), h)
\geq
d(g, h),
\end{align*}
so equality holds throughout, proving the claim.

We now verify that $w_G + w_H - w_{G \cap H}$ satisfies the weighting
equations for $X$.  These state that for all $x \in X$,
\begin{equation}        
\label{eq:union-main}
\sum_{g \in G} q^{d(x, g)} w_G(g) + \sum_{h \in H} q^{d(x, h)} w_H(h)
- \sum_{u \in G \cap H} q^{d(x, u)} w_{G \cap H}(u)
=
1.
\end{equation}
If $x \in G$ then by Lemma~\ref{lemma:pjn-wtgs}, the left-hand side
of~\eqref{eq:union-main} is
\[
1 
+ \sum_{h \in H} q^{d(x, h)} w_H(h) 
- \sum_{u \in G \cap H, \ h \in \pi^{-1}(u)} 
q^{d(x, u) + d(u, h)} w_H(h),
\]
which by Lemma~\ref{lemma:through-intersection} is equal to
\[
1 
+ \sum_{h \in \inftyball{H}{G \cap H}} q^{d(x, h)} w_H(h) 
- \sum_{h \in \inftyball{H}{G \cap H}} 
q^{d(x, \pi(h)) + d(\pi(h), h)} w_H(h), 
\]
and equation~\eqref{eq:pjn-path} implies that this is equal to $1$.  If $x
\in \inftyball{H}{G \cap H}$ then by equation~\eqref{eq:pjn-path}, the
left-hand side of~\eqref{eq:union-main} is
\begin{align*}
&
q^{d(x, \pi(x))} \sum_{g \in G} q^{d(\pi(x), g)} w_G(g)
+ 1 
- q^{d(x, \pi(x))} \sum_{u \in G \cap H} q^{d(\pi(x), u)} w_{G \cap H}(u)\\
= {}
&
q^{d(x, \pi(x))} + 1 - q^{d(x, \pi(x))} 
= 
1.
\end{align*}
Finally, if $x \in V(H) \without \inftyball{H}{G \cap H}$ then by
Lemma~\ref{lemma:through-intersection}, the left-hand side
of~\eqref{eq:union-main} is $0 + 1 - 0 = 1$.  So 
equation~\eqref{eq:union-main} holds in all cases, giving
$w_X = w_G + w_H - w_{G \cap H}$ by Lemma~\ref{lemma:wtg-mag}, as
required. 
\end{proof}

We record three corollaries.  First, given graphs $G$ and $H$, we may form
their one-point join $G \jn H$, obtained from the disjoint union of $G$ and
$H$ by identifying one vertex of $G$ with one vertex of $H$.  In principle,
the magnitude of $G \jn H$ could depend on the vertices chosen; but, like
the Tutte polynomial, it does not.

\begin{cor}     
\label{cor:join}
Let $G$ and $H$ be graphs.  Then $\mgb{G \jn H} = \mg{G} + \mg{H} - 1$.
\qed
\end{cor}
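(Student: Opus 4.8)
The plan is to exhibit $G\jn H$ as an instance of Theorem~\ref{thm:pjn-union} in which the overlap of the two subgraphs is as small as it can be. Write $X = G\jn H$, and let $v$ be the vertex of $X$ along which the copies of $G$ and $H$ have been identified. Regarding $G$ and $H$ as subgraphs of $X$ via the evident embeddings, we have $G\cup H = X$, while $G\cap H$ is the one-vertex graph on $\{v\}$, that is, $K_1$. (The choice of vertices of $G$ and $H$ being identified is immaterial to what follows, which is exactly the point of the corollary.)

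Next I would check the two hypotheses of Theorem~\ref{thm:pjn-union}. Convexity of $G\cap H$ in $X$ is immediate: the only pair of vertices to test is $(v,v)$, and $d_{G\cap H}(v,v) = 0 = d_X(v,v)$. That $H$ projects to $G\cap H$ is equally immediate: for any $x \in \inftyball{H}{G\cap H}$ the only candidate for $\pi(x)$ is $v$ itself, and the required identity $d(x,u) = d(x,\pi(x)) + d(\pi(x),u)$ for $u \in G\cap H$ amounts to $d(x,v) = d(x,v) + d(v,v)$, which holds.

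With the hypotheses in hand, Theorem~\ref{thm:pjn-union} gives $\mg{X} = \mg{G} + \mg{H} - \mgb{G\cap H}$. Finally, $G\cap H$ is an edgeless graph on one vertex, so $\mgb{G\cap H} = 1$ by Example~\ref{eg:edgeless}, and therefore $\mgb{G\jn H} = \mg{G} + \mg{H} - 1$.

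There is no real obstacle here: all the content lives in Theorem~\ref{thm:pjn-union}. The only point requiring a moment's care is that, a priori, the subspace metric on $G$ (or $H$) inside $X$ might differ from its intrinsic metric, so that the symbol $\mg{G}$ could be ambiguous; but Lemma~\ref{lemma:convex-subgraphs} shows that $G$ and $H$ are themselves convex in $X$ once $G\cap H$ is, so the two readings agree. (Alternatively, one can observe directly that a shortest path in $X$ between two vertices of $G$ never leaves $G$, since otherwise it would have to pass through $v$ at least twice.)
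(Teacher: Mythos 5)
Your proof is correct and follows exactly the route the paper intends: the corollary is stated with an immediate \textsc{qed} precisely because it is the special case of Theorem~\ref{thm:pjn-union} in which $G \cap H = K_1$, whose convexity and projection hypotheses are trivially satisfied and whose magnitude is $1$. Your extra remarks (that the identified vertices are immaterial, and that Lemma~\ref{lemma:convex-subgraphs} resolves any ambiguity in the meaning of $\mg{G}$) are accurate and match the paper's surrounding discussion.
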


The Tutte polynomial does not distinguish between the one-point join of two
graphs and their disjoint union: $T_{G \jn H} = T_{G \du H}$.  Magnitude
does: by Corollary~\ref{cor:join} and Lemma~\ref{lemma:coproduct}, $\mgb{G
  \jn H} = \mgb{G \du H} - 1$.

\begin{example}
Consider the following three graphs:
\[
\includegraphics[height=2em]{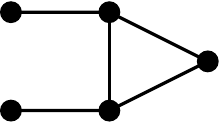}
\qquad
\includegraphics[height=2em]{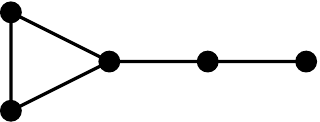}
\qquad
\includegraphics[height=2em]{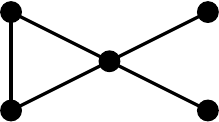}
\]
Using the one-point join operation twice, we can build each of them from
the same pieces, one copy of $K_3$ and two of $K_2$.  So all three have the
same magnitude (as well as the same Tutte polynomial), namely
\[
\mg{K_3} + 2\cdot\mg{K_2} - 2
=
\frac{5 + 5q - 4q^2}{(1 + q)(1 + 2q)}.
\]
\end{example}

\begin{example} 
\label{eg:forests}
Any forest $G$ can be obtained by successively joining edges to the
edgeless graph with one vertex for each component of $G$.  Repeated
application of Corollary~\ref{cor:join} gives
\begin{align*}
\mg{G}  &
= 
k(G) + e(G)\frac{1 - q}{1 + q}  
= 
v(G) - 2e(G)\frac{q}{1 + q}     \\
        &
=
v(G) - 2e(G)q + 2e(G)q^2 - 2e(G)q^3 + \cdots.
\end{align*}
In particular, the magnitude of a tree depends only on the number of edges.
\end{example}

Our second corollary, due to Meckes~\cite{MeckTPMF}, follows from
Example~\ref{eg:forests} or can be proved directly from
Theorem~\ref{thm:pjn-union}.

\begin{cor}[Meckes]       
\label{cor:glue-trees}
Let $X$ be a tree, with subtrees $G$ and $H$ such that $G \cup H = X$.
Then $\mg{X} = \mg{G} + \mg{H} - \mgb{G \cap H}$. \qed
\end{cor}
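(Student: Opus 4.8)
The plan is to deduce this directly from Example~\ref{eg:forests}, which computes the magnitude of an arbitrary forest $F$ as $\mg{F} = v(F) - 2e(F)\frac{q}{1+q}$. Since $G$, $H$ and $G \cap H$ are subgraphs of the tree $X$, they contain no cycles and are therefore forests, so this formula applies to each of $X$, $G$, $H$ and $G \cap H$. (If $G \cap H = \emptyset$ then connectedness of $X$ forces one of $G$, $H$ to be empty, and the claimed identity is immediate; so I may assume $G \cap H \neq \emptyset$.) Because $\mg{F}$ is a fixed $\rationals(q)$-linear combination of $v(F)$ and $e(F)$, it suffices to check that $v$ and $e$ each satisfy inclusion--exclusion for this particular triple of subgraphs.

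For the vertex count this is pure set theory: $V(X) = V(G) \cup V(H)$ and $V(G \cap H) = V(G) \cap V(H)$, so $v(X) = v(G) + v(H) - v(G \cap H)$. For the edge count, $E(X) = E(G) \cup E(H)$ because $G \cup H = X$, while $E(G \cap H) = E(G) \cap E(H)$, so likewise $e(X) = e(G) + e(H) - e(G \cap H)$. Substituting into the forest formula and collecting terms gives $\mg{X} = \mg{G} + \mg{H} - \mgb{G \cap H}$. There is no real obstacle here; the entire content sits in Example~\ref{eg:forests}.

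If instead a self-contained argument is wanted, one can apply Theorem~\ref{thm:pjn-union}, whose hypotheses must be verified in the tree setting. Uniqueness of paths in a tree shows that the unique $X$-path between two vertices of a subtree lies wholly within that subtree; applied to $G \cap H$ this shows both that $G \cap H$ is connected, hence a subtree, and that it is convex in $X$. The remaining hypothesis, that $H$ projects to $G \cap H$, is the one mildly delicate point: it amounts to the \emph{gate property} of a subtree inside a tree, namely that each $h \in V(H)$ has a unique nearest vertex $\pi(h) \in V(G \cap H)$ and that every path from $h$ into $G \cap H$ runs through $\pi(h)$, so that $d(h, u) = d(h, \pi(h)) + d(\pi(h), u)$ for all $u \in G \cap H$. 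Concretely $\pi(h)$ is the last vertex of $G \cap H$ on the path from any fixed basepoint of $G \cap H$ to $h$, and uniqueness of paths in $H$ makes this independent of the basepoint. With both hypotheses established, Theorem~\ref{thm:pjn-union} yields the conclusion at once; I expect this verification of the projection property to be the only step requiring genuine care, and it is entirely standard for trees.
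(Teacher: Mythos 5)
Your proposal is correct and follows exactly the two routes the paper itself indicates (the remark preceding the corollary says it ``follows from Example~\ref{eg:forests} or can be proved directly from Theorem~\ref{thm:pjn-union}''); your first argument correctly exploits the linearity of the forest formula in $v$ and $e$ together with inclusion--exclusion for vertex and edge counts, and your second correctly verifies convexity and the gate/projection property for subtrees. Both fill in details the paper omits, but neither departs from the paper's intended approach.
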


\begin{cor}     
\label{cor:bip-glue}
Let $G$ be a graph and $H$ a bipartite graph.  Let $X$ be a graph
obtained by identifying some edge of $G$ with some edge of $H$.  Then
\[
\mg{X} 
= 
\mg{G} + \mg{H} - \frac{2}{1 + q}.
\]
\end{cor}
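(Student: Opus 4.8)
The plan is to deduce Corollary~\ref{cor:bip-glue} from Theorem~\ref{thm:pjn-union}. Let $X$ be obtained by identifying an edge $e = \{a, b\}$ of $G$ with an edge of $H$; inside $X$, let $G$ and $H$ denote the images of the two original graphs, so that $G \cup H = X$ and $G \cap H$ is exactly the subgraph consisting of the edge $e$ together with its two endpoints $a$ and $b$. Note that $G \cap H \iso K_2$, and by Example~\ref{egs:trans}(\ref{eg:cycle}) (with $C_2$ interpreted as the single-edge graph) or by a one-line direct computation, $\mg{G \cap H} = \mg{K_2} = 2/(1+q)$. So once the hypotheses of Theorem~\ref{thm:pjn-union} are verified, the conclusion $\mg{X} = \mg{G} + \mg{H} - 2/(1+q)$ is immediate.

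The two hypotheses to check are: (i) $G \cap H$ is convex in $X$; and (ii) $H$ projects to $G \cap H$. For (i), any two vertices of $G \cap H$ are $a$ and $b$ themselves, and $d_{G \cap H}(a,b) = 1 = d_X(a,b)$ since the edge $e$ lies in $X$ (and the remaining pairs are trivial), so $G \cap H$ is convex in $X$. For (ii), the key observation is that $G \cap H = K_2$ sits inside $H$ as the chosen edge $e$, and $H$ is bipartite; by Example~\ref{eg:edge-pjn}, since $e$ is an edge of the bipartite graph $H$ and hence lies in a bipartite component of $H$, the graph $H$ projects to the subgraph consisting of $e$ and its endpoints, which is exactly $G \cap H$. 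Thus both hypotheses hold.

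Applying Theorem~\ref{thm:pjn-union} then gives $\mg{X} = \mg{G} + \mg{H} - \mg{G \cap H} = \mg{G} + \mg{H} - \frac{2}{1+q}$, as claimed.

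There is no real obstacle here; the only point requiring a moment's care is pinning down that $G \cap H$ is precisely the two-vertex, one-edge graph (so that identifying edges means identifying both endpoints and the edge between them), and invoking Example~\ref{eg:edge-pjn} in the correct direction — namely, that it is the \emph{bipartite} side $H$ that must project to $G \cap H$, which matches exactly the hypothesis of the theorem (that $H$ projects to $G \cap H$). Since the theorem is asymmetric in $G$ and $H$, it is worth stating explicitly in the proof that we apply it with the bipartite graph in the role of $H$.
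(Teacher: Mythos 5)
Your proof is correct and follows exactly the paper's route: the paper likewise deduces the corollary from Theorem~\ref{thm:pjn-union} together with Example~\ref{eg:edge-pjn} (for the projection hypothesis) and the formula $\mg{K_2} = 2/(1+q)$. Your write-up simply spells out the details — including the convexity check and the observation that the bipartite graph must play the role of $H$ — which the paper leaves implicit.
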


\begin{proof}
This follows from Theorem~\ref{thm:pjn-union} and
Example~\ref{eg:edge-pjn}, using the formula for $\mg{K_2}$ in
Example~\ref{egs:trans}(\ref{eg:complete}).
\end{proof}

\begin{example} 
\label{eg:even-cycle-anywhere}
Corollary~\ref{cor:bip-glue} implies that when an arbitrary graph $G$ has
an even cycle glued onto it by an edge, the magnitude of the resulting
graph does not depend on which edge of $G$ the cycle was glued onto.  This
is false in general for odd cycles, as the next example shows.  
\end{example}

\begin{example} 
\label{eg:houses}
Let $B$ be the graph formed by gluing a 3-cycle to a 4-cycle along an
edge.  By Corollary~\ref{cor:bip-glue}, $\mg{B} = \mg{C_3} + \mg{C_4} -
\mg{C_2}$.  

Now consider gluing a 3-cycle to $B$ along another edge of the 4-cycle.
Depending on which edge of $B$ we glue along, this could produce either of
the two graphs
\[
X = \cen{\includegraphics[width=4em]{Xhouse_mono.pdf}},
\qquad
Y = \cen{\includegraphics[width=4em]{Yhouse_mono.pdf}}.
\]
Neither $B$ nor $C_3$ is bipartite, so Corollary~\ref{cor:bip-glue} does
not apply to either $X$ or $Y$.  However, Theorem~\ref{thm:pjn-union} does
apply to $X$, taking $G = C_3$ and $H = B$.  Thus,
\[
\mg{X} 
=
\mg{C_3} + \mg{B} - \mg{C_2}
=
2\cdot\mg{C_3} + \mg{C_4} - 2\cdot\mg{C_2}
=
\frac{6 + 8q - 2q^2}{1 + 4q + 5q^2 + 2q^3}.
\]
On the other hand, the hypotheses of Theorem~\ref{thm:pjn-union} do not
hold for $Y = C_3 \cup B$.  Nor does the conclusion, since a direct
calculation shows that
\[
\mg{Y}
=
\frac{6 - 4q}{1 + 2q - q^3}
\neq
\mg{X}.
\]
\end{example}

\section{Whitney twists}
\label{sec:Whitney}

Much information about a graph is contained in its cycle matroid.
(See~\cite{Oxle}, for instance.)  Essentially by definition, two graphs $G$
and $H$ have isomorphic cycle matroids if and only if there is a bijection
between their edge-sets with the property that a sequence of edges in $G$
is a cycle exactly when the corresponding sequence in $H$ is a cycle.  In
1933~\cite{Whit2IG}, Whitney showed that two graphs have isomorphic cycle
matroids if and only if one can be transformed into the other by a finite
sequence of moves of the following three types.

The first is \demph{vertex identification}: whenever a graph $X$ can be
decomposed as a disjoint union $G \du H$, and $g$ and $h$ are vertices of
$G$ and $H$ respectively, change $X$ to the graph $G \jn H$ formed by
identifying $g$ with $h$.  The second is the reverse of the first.

\begin{figure}
\centering
\setlength{\unitlength}{1mm}
\begin{picture}(120,35)
\cell{28}{5}{b}{\includegraphics[height=30\unitlength]{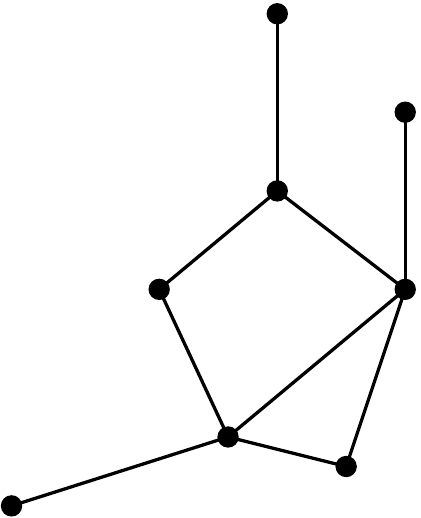}}
\cell{28}{-1}{b}{$X$}
\cell{24.5}{20.5}{c}{$g_+$}
\cell{24.5}{16}{c}{$h_+$}
\cell{42.5}{20.5}{c}{$g_-$}
\cell{41.5}{15}{c}{$h_-$}
\cell{10}{11}{c}{$\left\{\makebox[0em]{\rule{0em}{7\unitlength}}\right.$}
\cell{10}{27}{c}{$\left\{\makebox[0em]{\rule{0em}{10\unitlength}}\right.$}
\cell{6.5}{27}{c}{$G$}
\cell{6.5}{11}{c}{$H$}
\cell{90}{5}{b}{\includegraphics[height=30\unitlength]{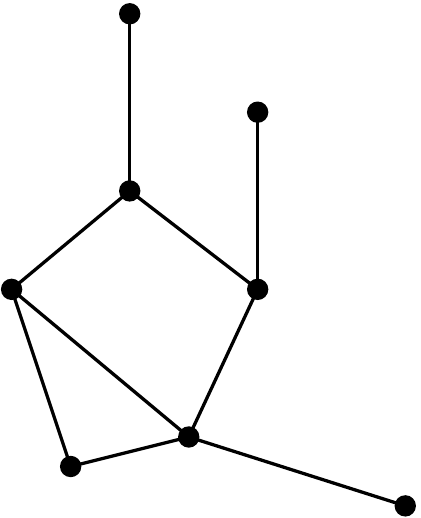}}
\cell{90}{-1}{b}{$Y$}
\cell{78}{20.5}{c}{$g_+$}
\cell{77.5}{15}{c}{$h_-$}
\cell{96}{20.5}{c}{$g_-$}
\cell{95.5}{16}{c}{$h_+$}
\cell{108}{11}{c}{$\left.\makebox[0em]{\rule{0em}{7\unitlength}}\right\}$}
\cell{108}{27}{c}{$\left.\makebox[0em]{\rule{0em}{10\unitlength}}\right\}$}
\cell{111}{27}{c}{$G$}
\cell{111}{11}{c}{$H$}
\end{picture}
\caption{Two graphs $X$ and $Y$ differing by a Whitney twist.}
\label{fig:Whitney}
\end{figure}

The third is the Whitney twist, defined as follows
(Figure~\ref{fig:Whitney}).  Take a graph $G$ equipped with two distinct
distinguished vertices, $g_+$ and $g_-$, and take $H$, $h_+$ and $h_-$
similarly.  Form a new graph $X$ by taking the disjoint union of $G$ and
$H$ then identifying $g_+$ with $h_+$ and $g_-$ with $h_-$ (and, if this
creates a double edge between the points of identification, identifying
those edges).  Define $Y$ similarly, but identifying $g_+$ with $h_-$ and
$g_-$ with $h_+$.  The graphs $X$ and $Y$ are said to differ by a
\demph{Whitney twist}.

By the theorem of Whitney, a graph invariant assigns the same value to
graphs with isomorphic cycle matroids if and only if it is invariant under
vertex identification and Whitney twists.  Now, magnitude is not invariant
under vertex identification, as by Lemma~\ref{lemma:coproduct} and
Corollary~\ref{cor:join},
\[
\mg{(G \du H)} = \mg{G} + \mg{H},
\qquad
\mg{(G \jn H)} = \mg{G} + \mg{H} - 1.
\]
However, these equations imply that $\Phi(G) = \mg{G} - k(G)$ is invariant
under vertex identification, where $k$ is the number of
connected-components.  Moreover, $k$ is invariant under Whitney twists.
Hence $\Phi$ depends only on the cycle matroid if and only if magnitude is
invariant under Whitney twists.

We show here that, in fact, magnitude is not invariant under Whitney
twists; so $\Phi$ does not depend only on the cycle matroid.  Moreover,
since the Tutte polynomial can be defined in terms of the cycle matroid and
is therefore invariant under Whitney twists, magnitude is not a
specialization of the Tutte polynomial.  This is trivially true for
disconnected graphs (by Example~\ref{eg:edgeless}), but the graphs in our
counterexample are connected.

On the other hand, the main result of this section is that magnitude
\emph{is} invariant under Whitney twists when the two points of
identification are adjacent (Theorem~\ref{thm:Whitney}).  In this sense,
$\mg{G} - k(G)$ comes close to depending only on the cycle matroid of $G$.

We begin by exhibiting two graphs that differ by a Whitney twist but do not
have the same magnitude.  This strategy for showing that magnitude is not a
specialization of the Tutte polynomial was suggested by
Speyer~\cite{SpeyTPMF}, and the first example of such a pair was found by
Willerton~\cite{WillTPMF}.  The following proof uses a smaller example.

\begin{propn}[Speyer and Willerton]
\label{propn:not-specialization} 
There exists a pair of connected graphs with
isomorphic cycle matroids (hence the same Tutte polynomial) but different
magnitudes.
\end{propn}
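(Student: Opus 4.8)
The plan is to exhibit an explicit pair of small connected graphs $X$ and $Y$ differing by a Whitney twist, compute their magnitudes, and check the magnitudes differ. Since Whitney-twist-related graphs automatically have isomorphic cycle matroids (hence the same Tutte polynomial), producing one such pair with distinct magnitude suffices. The natural source of examples is the construction in Example~\ref{eg:houses}: the two ``house'' graphs $X$ and $Y$ obtained by gluing a $3$-cycle onto the graph $B$ (a $3$-cycle glued to a $4$-cycle along an edge) along two different edges of the $4$-cycle are precisely a Whitney-twist pair --- take $G = C_3$, $H = B$, with the twist performed at the two endpoints of the shared edge. Both are connected by inspection.

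First I would note that $X$ and $Y$ are connected and related by a Whitney twist, so their cycle matroids are isomorphic. Then I would invoke the magnitude computations already carried out: by Theorem~\ref{thm:pjn-union} applied with $G = C_3$ and $H = B$ (whose hypotheses hold for $X$, as checked in Example~\ref{eg:houses}),
\[
\mg{X} = \frac{6 + 8q - 2q^2}{1 + 4q + 5q^2 + 2q^3},
\]
while a direct calculation gives
\[
\mg{Y} = \frac{6 - 4q}{1 + 2q - q^3}.
\]
These two rational functions are visibly unequal --- for instance their power-series expansions differ at order $q^2$, or one simply cross-multiplies and compares polynomials --- so $\mg{X} \neq \mg{Y}$, completing the proof.

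The only real content is the computation of $\mg{Y}$, which is not covered by any earlier theorem (the point of the example being precisely that Theorem~\ref{thm:pjn-union} fails for $Y$). This is a finite linear-algebra calculation: write down the $7 \times 7$ (or however many vertices $Y$ has) distance matrix $Z_Y(q)$, solve the weighting equations~\eqref{eq:wtg}, and sum the weights using Lemma~\ref{lemma:wtg-mag}; symmetries of $Y$ reduce the number of independent unknowns. I expect this to be the main obstacle only in the mechanical sense --- it is routine but must be done carefully. A cleaner route, if available, is to express $Y$ itself as a union to which Theorem~\ref{thm:pjn-union} or Corollary~\ref{cor:bip-glue} partially applies (e.g. peeling off one bipartite piece) and patch the remainder by hand; but in the worst case the brute-force weighting computation always works.
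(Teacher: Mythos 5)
Your overall strategy is the paper's own: the proof in the paper consists of the single sentence that the graphs $X$ and $Y$ of Example~\ref{eg:houses} differ by a Whitney twist but have different magnitudes, and your citation of the computed values of $\mg{X}$ and $\mg{Y}$ is correct. The gap is in your justification that $X$ and $Y$ actually differ by a Whitney twist (equivalently, have isomorphic cycle matroids). The decomposition you propose --- $G = C_3$, $H = B$, with the twist performed at the two endpoints of the shared edge --- cannot be right, for two reasons. First, $C_3$ has an automorphism fixing the gluing edge and interchanging its endpoints, so twisting this decomposition returns a graph isomorphic to $X$ itself, not to $Y$; the graph $Y$ is obtained by attaching the triangle along a \emph{different} edge of the $4$-cycle, which is not a twist of your decomposition at all. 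Second, and more seriously, your proposed gluing points are adjacent (they span an edge of $B$), so if $X$ and $Y$ really were related by a twist at those points, Theorem~\ref{thm:Whitney} would force $\mg{X} = \mg{Y}$ and the proof would collapse.

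The correct decomposition is at a \emph{non-adjacent} pair of vertices. Label the $4$-cycle $a, b, c, d$ and suppose the two triangles of $X$ are attached along $ab$ and $bc$, with apexes $e$ and $f$. Then $\{b, d\}$ is a $2$-vertex cut of $X$, and $X$ is obtained by gluing the two ``paw'' graphs $G'$ (vertices $a, b, d, e$; edges $ab, ae, be, ad$) and $H'$ (vertices $b, c, d, f$; edges $bc, bf, cf, cd$) along $\{b, d\}$. Performing the twist --- identifying $b$ of $G'$ with $d$ of $H'$ and vice versa --- moves the second triangle from edge $bc$ to edge $cd$ and yields $Y$. Since $b$ and $d$ are opposite corners of the $4$-cycle, the gluing points are not adjacent, so Theorem~\ref{thm:Whitney} does not apply, which is exactly why the magnitudes are free to differ. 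With this repaired (or with a direct verification of the cycle-matroid isomorphism via an explicit edge bijection), the rest of your argument --- the two magnitude computations from Example~\ref{eg:houses} and the observation that the resulting rational functions are distinct --- coincides with the paper's proof.
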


\begin{proof}
The graphs $X$ and $Y$ of Example~\ref{eg:houses} differ by a Whitney
twist, but have different magnitudes.
\end{proof}

Before we prove our main result on Whitney twists, let us fix some
notation.  We work with graphs $X$ and $Y$ obtained from $(G, g_+, g_-)$
and $(H, h_+, h_-)$, as in the definition of Whitney twist stated above.
The vertex of $X$ formed by identifying $g_+$ with $h_+$ will be denoted by
either $g_+$ or $h_+$; thus, $g_+ = h_+$ as vertices of $X$.  We refer to
$g_+ = h_+$ and $g_- = h_-$ as the \demph{gluing points} of $X$, and
similarly for $Y$.

The vertices of $X$ that are not gluing points are in canonical bijection
with the vertices of $Y$ that are not gluing points.  The two gluing points
are adjacent in $X$ if and only if either $g_+$ is adjacent to $g_-$ in $G$
or $h_+$ is adjacent to $h_-$ in $H$.  This in turn is equivalent to the
gluing points being adjacent in $Y$.

\begin{thm}     
\label{thm:Whitney}
Let $X$ and $Y$ be graphs differing by a Whitney twist, and suppose that
the two gluing points are adjacent in $X$ (or equivalently $Y$).  Then
$\mg{X} = \mg{Y}$.
\end{thm}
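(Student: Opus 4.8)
The plan is to exhibit $Z_Y$ as a congruence transform $M Z_X M^{\mathsf T}$ by a matrix $M$ that fixes the all-ones vector; since $\mg{X}$ and $\mg{Y}$ are the sums of all entries of $Z_X^{-1}$ and $Z_Y^{-1}$ respectively, they must then agree. Using the symmetry between $G$ and $H$ in the definition of the twist, I would first reduce to the case where the edge joining the two gluing points lies in $G$, so that $g_+$ is adjacent to $g_-$ in $G$. Let $H^+$ be $H$ with an edge added between $h_+$ and $h_-$ (if one is not already there). Then $X = G \cup H^+$ and $Y = G \cup H^+$ (the two graphs glued the two ways round), with $G \cap H^+$ equal in each to the single edge $g_+g_-$. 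That copy of $K_2$ is convex in $X$ and in $Y$, so by Lemma~\ref{lemma:convex-subgraphs} the subgraphs $G$ and $H^+$ are convex in both; hence the distance between two vertices of $G$, and the distance between two vertices of $H^+$, are unchanged by the twist. So $Z_X$ and $Z_Y$ can differ only in the block indexed by $V(G)$-rows and $\bigl(V(H) \without \{h_+,h_-\}\bigr)$-columns, together with its transpose.

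To compute that block, note that by Lemma~\ref{lemma:through-intersection} every path in $X$ from $g \in V(G)$ to a vertex $y$ of $H$ other than $h_\pm$ passes through $g_+$ or $g_-$, so $d_X(g,y) = \min\bigl\{\, d_G(g,g_+) + d_{H^+}(h_+,y),\ d_G(g,g_-) + d_{H^+}(h_-,y) \,\bigr\}$, and the same holds in $Y$ with $h_+$ and $h_-$ interchanged. Since $g_+ \sim g_-$, we have $|d_G(g,g_+) - d_G(g,g_-)| \leq 1$, and a short case analysis on whether $g$ is closer to $g_+$, closer to $g_-$, or equidistant then yields
\[
q^{d_Y(g,y)} - q^{d_X(g,y)} = \mathbf{u}(g)\, \mathbf{v}(y),
\]
where $\mathbf{v}(y) = q^{d_{H^+}(h_-,y)} - q^{d_{H^+}(h_+,y)}$ and $\mathbf{u}(g)$ equals $q^{d_G(g,g_+)}$, $-q^{d_G(g,g_-)}$, or $0$ in the three respective cases. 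Extending $\mathbf{u}$ and $\mathbf{v}$ by zero to all of $V(X)$, this reads $Z_Y = Z_X + \mathbf{u}\mathbf{v}^{\mathsf T} + \mathbf{v}\mathbf{u}^{\mathsf T}$.

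The key point is that this rank-$2$ perturbation is a disguised congruence. Let $\mathbf{f}$ be the vector (indexed by $V(X)$) with entry $+1$ at $g_-$, $-1$ at $g_+$, and $0$ elsewhere. Another short case-check, again resting on the adjacency of $g_+$ and $g_-$, gives $Z_X \mathbf{f} = \mathbf{v} - (1-q)\mathbf{u}$, while $\mathbf{f}^{\mathsf T} Z_X \mathbf{f} = 2(1-q)$ simply because $d_X(g_+,g_-) = 1$. Substituting $\mathbf{v} = Z_X\mathbf{f} + (1-q)\mathbf{u}$ into the displayed formula for $Z_Y$ and collecting the $\mathbf{u}\mathbf{u}^{\mathsf T}$ terms, one finds, with $I$ the identity matrix,
\[
Z_Y = (I + \mathbf{u}\mathbf{f}^{\mathsf T})\, Z_X\, (I + \mathbf{f}\mathbf{u}^{\mathsf T}) = M Z_X M^{\mathsf T}, \qquad M := I + \mathbf{u}\mathbf{f}^{\mathsf T}.
\]
Now $M$ is invertible over $\rationals(q)$ since $\det M = 1 + \mathbf{f}^{\mathsf T}\mathbf{u} = -1$ (as $\mathbf{u}(g_+) = 1$ and $\mathbf{u}(g_-) = -1$), and $\mathbf{f}^{\mathsf T}\mathbf{1} = 0$ for the all-ones vector $\mathbf{1}$, so $M\mathbf{1} = \mathbf{1}$ and hence $M^{-1}\mathbf{1} = \mathbf{1}$. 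Therefore
\[
\mg{Y} = \mathbf{1}^{\mathsf T} Z_Y^{-1} \mathbf{1} = (M^{-1}\mathbf{1})^{\mathsf T} Z_X^{-1} (M^{-1}\mathbf{1}) = \mathbf{1}^{\mathsf T} Z_X^{-1}\mathbf{1} = \mg{X}.
\]

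Equivalently, in the language of Lemma~\ref{lemma:wtg-mag}: if $s = \sum_{g \in V(G)} \mathbf{u}(g)\, w_X(g)$, then $w_X + s\mathbf{f}$ can be checked to satisfy the weighting equations for $Y$, hence equals $w_Y$, and summing its values gives $\mg{Y} = \mg{X}$ since the entries of $\mathbf{f}$ sum to $0$. I expect the main obstacle to be the two case analyses — pinning down the cross-block of $Z_Y - Z_X$ and verifying $Z_X\mathbf{f} = \mathbf{v} - (1-q)\mathbf{u}$ — both of which hinge on the inequality $|d_G(g,g_+) - d_G(g,g_-)| \leq 1$, i.e.\ precisely on the adjacency hypothesis; once the congruence $Z_Y = M Z_X M^{\mathsf T}$ with $M\mathbf{1} = \mathbf{1}$ is in hand, the conclusion is immediate, and it is clear why the hypothesis cannot simply be dropped.
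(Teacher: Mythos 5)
Your proof is correct, and it reaches the conclusion by a genuinely different route from the paper's, even though the combinatorial core is shared. The paper works entirely with weightings: it writes down an explicit candidate $\twid{w}_Y$, equal to $w_X$ away from the gluing points and adjusted there by $\pm\bigl(u^G_+ - u^G_-\bigr)$, and verifies the weighting equations for $Y$ by a case analysis over the partition $G_+ \cup G_0 \cup G_-$. Your closing paragraph is in fact exactly that proof: your $s = \sum_g \mathbf{u}(g)\,w_X(g)$ equals the paper's $u^G_+ - u^G_-$, and $w_X + s\mathbf{f}$ is precisely the paper's $\twid{w}_Y$. Your primary argument instead isolates $Z_Y - Z_X$ as the symmetric rank-two update $\mathbf{u}\mathbf{v}^{\mathsf T} + \mathbf{v}\mathbf{u}^{\mathsf T}$ and recognises it as the congruence $Z_Y = (I + \mathbf{u}\mathbf{f}^{\mathsf T})\,Z_X\,(I + \mathbf{f}\mathbf{u}^{\mathsf T})$; I have checked the identities $Z_X\mathbf{f} = \mathbf{v} - (1-q)\mathbf{u}$ and $\mathbf{f}^{\mathsf T}Z_X\mathbf{f} = 2(1-q)$ and the substitution, and they do reproduce $\mathbf{u}\mathbf{v}^{\mathsf T} + \mathbf{v}\mathbf{u}^{\mathsf T}$ exactly, so the argument is sound. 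Both proofs ultimately rest on the same three-way case analysis hinging on $\bigl|d(g,g_+) - d(g,g_-)\bigr| \leq 1$ and its analogue in $H^+$, which is where the adjacency hypothesis enters; but your packaging buys something real. The congruence is a stronger statement than $\mg{X} = \mg{Y}$: it gives the precise relation $Z_Y^{-1} = (M^{\mathsf T})^{-1} Z_X^{-1} M^{-1}$ between the full inverses (hence between all individual weights, not just their totals), and it makes transparent why the hypothesis cannot be dropped (without it the cross-block difference need not have rank one). One point to make explicit in a write-up: the bijection $V(X) \to V(Y)$ under which you compare $Z_X$ and $Z_Y$ must carry the gluing points to themselves via their $G$-labels, so that the entries indexed by $(g_\pm, y)$ with $y \in V(H)\setminus\{h_+,h_-\}$ genuinely change and are correctly absorbed into $\mathbf{u}\mathbf{v}^{\mathsf T}$; your values $\mathbf{u}(g_+) = 1$, $\mathbf{u}(g_-) = -1$ handle this, but it deserves a sentence.
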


This was conjectured by Willerton~\cite{WillTPMF}.  In the proof, we do not
attempt to derive any expression for $\mg{X}$ or $\mg{Y}$ in terms of
$\mg{G}$ and $\mg{H}$.  (Example~\ref{eg:two-3-cycles} shows that it is not
given by the inclusion-exclusion formula.)  Instead, we find a
direct relationship between the weightings on $X$ and $Y$.

\begin{proof}
We use the same notation as above, and assume without loss of generality
that $\{g_+, g_-\} \in E(G)$ and $\{h_+, h_-\} \in E(H)$.

Both $G$ and $H$ are convex in both $X$ and $Y$, so we may unambiguously
use the unsubscripted notation $d(a, b)$ when $a$ and $b$ both belong to
$G$ or both belong to $H$.  To describe the other distances in $X$ and $Y$,
it is convenient to introduce some further notation.  For $g \in G$, write
\[
\delta(g) = \min \{ d(g, g_-), d(g, g_+) \},
\]
and similarly $\delta(h)$ for $h \in H$.  Partition $V(G)$ as $G_+ \cup G_0
\cup G_-$, where
\begin{align*}
G_+     &=      \{ g \in G \such d(g, g_+) < d(g, g_-)\},       \\
G_0     &=      \{ g \in G \such d(g, g_+) = d(g, g_-)\},       \\
G_-     &=      \{ g \in G \such d(g, g_+) > d(g, g_-)\},
\end{align*}
and similarly for $H$.  Then for $g \in G$ and $h \in H$, we have
\begin{align*}
d_X(g, h)       &
=
\begin{cases}
\delta(g) + \delta(h) + 1       &
\text{if } (g \in G_+ \text{ and } h \in H_-) 
\text{ or } (g \in G_- \text{ and } h \in H_+)  \\
\delta(g) + \delta(h)   &
\text{otherwise,}
\end{cases}     \\
d_Y(g, h)      &
=
\begin{cases}
\delta(g) + \delta(h) + 1       &
\text{if } (g \in G_+ \text{ and } h \in H_+) 
\text{ or } (g \in G_- \text{ and } h \in H_-)  \\
\delta(g) + \delta(h)   &
\text{otherwise.}
\end{cases}
\end{align*}

We now describe the weighting on $Y$.  Put
\[
u^G_+ = \sum_{g \in G_+} q^{\delta(g)} w_X(g),
\quad
u^G_0 = \sum_{g \in G_0} q^{\delta(g)} w_X(g),
\quad
u^G_- = \sum_{g \in G_-} q^{\delta(g)} w_X(g),
\]
and similarly $u^H_+$, $u^H_0$ and $u^H_-$.  Define $\twid{w}_Y\from V(Y)
\to \rationals(q)$ by $\twid{w}_Y(y) = w_X(y)$ whenever $y$ is not a gluing
point, and
\begin{align}
\twid{w}_Y(g_+)         &
= 
w_X(g_+) - u^G_+ + u^G_-,
\label{eq:wY-plus}      \\
\twid{w}_Y(g_-)         &
= 
w_X(g_-) - u^G_- + u^G_+.
\label{eq:wY-minus}
\end{align}
We will show that $\twid{w}$ satisfies the weighting equations for $Y$,
which by Lemma~\ref{lemma:wtg-mag} implies that $\twid{w}_Y =
w_Y$, hence $\mg{Y} = \mg{X}$.

First I claim that the defining equations~\eqref{eq:wY-plus}
and~\eqref{eq:wY-minus} for $\twid{w}_Y$ are unchanged if we replace $G$ by
$H$ and $g$ by $h$ throughout.  Because of the identifications between
$g_\pm$ and $h_\pm$ in $X$ and in $Y$, this reduces to the claim that
\begin{equation}
\label{eq:sym}
w_X(g_+) - u^G_+ + u^G_- 
=
w_X(h_-) - u^H_- + u^H_+.
\end{equation}

To prove this, note that
\[
V(X) = (G_+ \cup G_0 \cup G_-) \cup (H_+ \cup H_0 \cup H_-),
\]
this union being disjoint except that $G_+ \cap H_+ = \{ g_+ \}$ and $G_-
\cap H_- = \{ g_- \}$.  The weighting equation $\sum_{x \in X} q^{d_X(g_+,
  x)} w_X(x) = 1$ therefore gives
\begin{equation}
\label{eq:pvt-plus}
(u^G_+ + u^G_0 + qu^G_-) 
+ (u^H_+ + u^H_0 + qu^H_-) 
- (w_X(g_+) + qw_X(g_-))
=
1.
\end{equation}
The same is true when $+$ and $-$ are interchanged:
\begin{equation}
\label{eq:pvt-minus}
(qu^G_+ + u^G_0 + u^G_-) 
+ (qu^H_+ + u^H_0 + u^H_-) 
- (qw_X(g_+) + w_X(g_-))
=
1.
\end{equation}
Subtracting~\eqref{eq:pvt-minus} from~\eqref{eq:pvt-plus}
gives~\eqref{eq:sym}, proving the claim.

We now show that $\twid{w}_Y$ satisfies the weighting equations.  By the
symmetry just established, it is enough to show that $\sum_{y \in Y}
q^{d_Y(g, y)} \twid{w}_Y(y) = 1$ whenever $g \in G$.  Let $g \in G$.  We
have
\begin{equation}        
\label{eq:to-vanish}
\sum_{y \in Y} q^{d_Y(g, y)} \twid{w}_Y(y) - 1  
=
\sum_{y \in Y} q^{d_Y(g, y)} \twid{w}_Y(y) 
- \sum_{x \in X} q^{d_X(g, x)} w_X(x),
\end{equation}
and we want to prove that the left-hand side of~\eqref{eq:to-vanish} is
zero.  When $x = y \in G\without\{g_+, g_-\}$, we have $d_Y(g, y) = d_X(g,
x)$ and $\twid{w}_Y(y) = w_X(x)$, so the $x$- and $y$-summands on the
right-hand side cancel out.  The same is true when $x = y \in H_0$.  The
right-hand side is therefore unchanged if each sum is restricted to run
over only $H_+ \cup H_-$.  So by definition of $\twid{w}_Y$, 
the right-hand side is equal to
\begin{equation}
\label{eq:to-vanish-long}
\sum_{h \in (H_+ \cup H_-) \without \{ h_+, h_- \}}
\bigl(q^{d_Y(g, h)} - q^{d_X(g, h)}\bigr) w_X(h)
+ \bigl(q^{d(g, g_+)} - q^{d(g, g_-)}\bigr)\bigl(u^G_- - u^G_+\bigr).
\end{equation}
We must show that this is zero.  If $g \in G_0$ then every summand
in~\eqref{eq:to-vanish-long} vanishes.  If $g \in G_+$
then~\eqref{eq:to-vanish-long} is equal to
\begin{align*}
&
\sum_{h \in H_+ \without \{ h_+ \}} 
\bigl(q^{\delta(g) + \delta(h) + 1} - q^{\delta(g) + \delta(h)}\bigr) 
w_X(h) 
\\
&
{} +
\sum_{h \in H_- \without \{ h_- \}} 
\bigl(q^{\delta(g) + \delta(h)} - q^{\delta(g) + \delta(h) + 1}\bigr) 
w_X(h)
+ 
\bigl(q^{\delta(g)} - q^{\delta(g) + 1}\bigr)\bigl(u^G_- - u^G_+\bigr)  
\\
=\ &
q^{\delta(g)}(q - 1) 
\Bigl\{
\bigl(u^H_+ - w_X(h_+)\bigr) - \bigl(u^H_- - w_X(h_-)\bigr) 
- \bigl(u^G_- - u^G_+\bigr)
\Bigr\} \\
=\ &
0,
\end{align*}
using~\eqref{eq:sym} in the last step.  By symmetry, if $g \in G_-$
then~\eqref{eq:to-vanish-long} is also zero.
Hence~\eqref{eq:to-vanish-long} is zero in all cases, completing the proof.
\end{proof}

\begin{example}
Randomly generate graphs $G$ and $H$, making each pair of vertices adjacent
with probability $p$.  Choose at random a pair of distinct vertices in each
of $G$ and $H$, and glue $G$ and $H$ together at these vertices to form
graphs $X$ and $Y$ differing by a Whitney twist.  The probability that the
gluing points are adjacent in $X$ is $p(2 - p)$, so by
Theorem~\ref{thm:Whitney}, the probability that $\mg{X} = \mg{Y}$ is at
least $p(2 - p)$.  For example, when $p = 1/2$, graphs differing by a
Whitney twist have equal magnitude with probability at least $3/4$.
\end{example}

It may happen that graphs differing by a Whitney twist have the same
magnitude even if the gluing points are not adjacent.  This can occur for
trivial reasons of symmetry, or for other reasons.  For example, the graphs
\[
\includegraphics[width=6em]{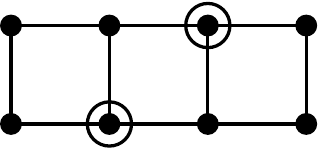}
\qquad\qquad
\includegraphics[width=4.4em]{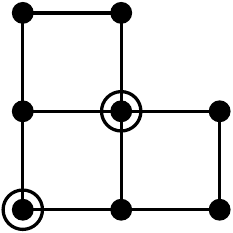}
\]
differ by a Whitney twist, but the gluing points (circled) are not
adjacent, so the hypotheses of Theorem~\ref{thm:Whitney} are not satisfied.
Nevertheless, Example~\ref{eg:even-cycle-anywhere} guarantees that they
have the same magnitude.

\paragraph*{Acknowledgements}  
Thanks to Owen Biesel, Joachim Kock and Mark Meckes for helpful
suggestions.  I am especially grateful to David Speyer and Simon Willerton
for sharing their ideas and allowing me to include
Proposition~\ref{propn:not-specialization}.

\end{document}